\def\thm@space@setup{%
  \thm@preskip=2ex \thm@postskip=2ex
}
\newtheorem{thm}{Theorem~}[section]
\newtheorem{lem}[thm]{Lemma~}
\newtheorem{prop}[thm]{Proposition~}
\newtheorem{de}[thm]{Definition~}
\newtheorem{rmk}[thm]{Remark~}
\newtheorem{ex}[thm]{Example~}
\newcommand{\CC}{\mathbb{C}}
\newcommand{\ZZ}{\mathbb{Z}}
\newcommand{\PP}{\mathbb{P}}
\newcommand{\QQ}{\mathbb{Q}}
\newcommand{\DD}{\mathbb{D}}
\newcommand{\HH}{\mathbb{H}}
\newcommand{\Prd}{\mathscr{P}}
\newcommand{\F}{\mathcal{F}}
\newcommand{\calH}{\mathcal{H}}
\newcommand{\calM}{\mathcal{M}}
\newcommand{\calV}{\mathcal{V}}
\newcommand{\calO}{\mathcal{O}}
\newcommand{\calS}{\mathcal{S}}
\newcommand\ADE{\mathrm{ADE}}
\newcommand\GIT{\mathrm{GIT}}
\newcommand\Sym{\mathrm{Sym}}
\newcommand\SL{\mathrm{SL}}
\newcommand\IV{\mathrm{IV}}
\newcommand\PSL{\mathrm{PSL}}
\newcommand{\Tor}{\mathrm{Tor}}
\newcommand{\rank}{\mathrm{rank}}
\newcommand{\Pic}{\mathrm{Pic}}
\newcommand{\diag}{\mathrm{diag}}
\newcommand{\GL}{\mathrm{GL}}
\newcommand{\I}{\mathrm{I}}
\newcommand{\bs}{\backslash}
\newcommand{\dbs}{\bs\!\! \bs}
\title{Moduli of Nodal Sextic Curves via Periods of $K3$ Surfaces}
 \author{Chenglong Yu, Zhiwei Zheng}
 \date{}
 \newcommand{\Addresses}{{
  \bigskip
  \footnotesize

  C.~Yu, \textsc{Tsinghua University, Beijing, China} \par\nopagebreak
   \textit{E-mail address}: \texttt{yuchenglong@tsinghua.edu.cn}

  \medskip

  Z.~Zheng, \textsc{Tsinghua University, Beijing, China}\par\nopagebreak
  \textit{E-mail address}: \texttt{zhengzhiwei@mail.tsinghua.edu.cn}
}}
\begin{document}
\bibliographystyle{amsalpha}

\begin{abstract}
In this paper we study the moduli spaces of nodal sextic curves. We realize each irreducible component of the GIT space of sextic curves with given number of nodes as an open subspace of type $\IV$ arithmetic quotients. We then focus on the compactifications of the moduli spaces, one side is the geometric ($\GIT$) compactifications, the other side is the Hodge theoretic compactifications such as Looijenga compactifications and Baily-Borel compactifications. The main result is the isomorphism between GIT and Looijenga compactifications. Some examples are closely related to del Pezzo surfaces. We also extend our results to moduli of nodal sextic curves with specified symmetry. 
\end{abstract}

\maketitle


\section{Introduction}
\label{section: introduction}
The study of moduli spaces of nodal plane curves has a long history tracing back to Severi \cite{severi1968curve}. He proved the smoothness and counted the dimension of the moduli space of nodal curves with given degree and number of nodes. (He also asserted the connectedness of the moduli of irreducible nodal plane curves with fixed degree and number of nodes, which was proved by Harris \cite{harris1986severi}.)

In this paper we focus on sextic curves. The double cover of $\PP^2$ branched along a smooth plane sextic is a $K3$ surface of degree $2$ and a generic $K3$ surface of degree $2$ arises this way. Consequently the period map of $K3$ surfaces gives rise to an open embedding from the $\GIT$ moduli of smooth sextic curves to an arithmetic quotient of type $\IV$ domain of dimension $19$.  An important problem related to this construction is the compactification of the moduli spaces. On one side, we can obtain a geometric ($\GIT$) compactification parametrizing certain singular curves. On the other side, we have a Hodge theoretical (Baily-Borel, or toroidal) compactification describing the degenerations of Hodge structures. A natural question is the comparison between the two different approaches of compactifications. The remarkable work by Shah \cite{shah1980completek3} realizes the $\GIT$ compactification as an explicit modification of the Baily-Borel compactification. This construction fits into a more general framework developed by Looijenga \cite{looijenga2003ball,looijenga2003typefour}, which we explain below.

The period map extends to a morphism from the moduli space of plane sextic curves with at worst $\ADE$ singularities to the arithmetic quotient, with image the complement of an irreducible divisor which is the quotient of a hyperplane arrangement $\calH_{\infty}$ in the type $\IV$ domain (see theorem \ref{theorem: shah}). Looijenga \cite{looijenga2003ball,looijenga2003typefour} has developed a machinery for compactifications of arithmetic quotients of arrangement complements in complex hyperbolic balls or type $\IV$ domains. Notice that these are the only irreducible Hermitian symmetric domains which have totally geodesic hypersurfaces. There are two steps in Looijenga compactification. First step is a partial blowup of the boundary components of Baily-Borel compactification, sitting between toroidal compactification and Baily-Borel. Second step is blowups of the intersection poset of the hyperplane arrangement and blowdowns in the opposite direction. 

In \cite{zarhin1983hodge}, Zarhin classified the Mumford-Tate groups of $K3$ type Hodge structures. The corresponding Mumford-Tate subdomains are complex hyperbolic balls and type $\IV$ domains.  In particular, the moduli spaces of the $K3$ type Hodge structures with specified symmetries give rise to arithmetic quotients of complex hyperbolic balls and type $\IV$ domains.  Symmetries of Hodge structures can arise from those of the geometric objects. This leads to constructions of many moduli spaces as arithmetic quotients of complex hyperbolic balls or type IV domains, such as Kond\=o \cite{kondo2000complex, kondo2000moduli}, Allcock-Toledo-Carlson \cite{allcock2002complex, allcock2011moduli}, Looijenga-Swierstra \cite{looijenga2007period}, Laza-Pearlstein-Zhang \cite{laza2017moduli}. See also \cite{yu2018moduli} for a systematic treatment of symmetric cubic fourfolds.

The starting idea of this work is that symmetries of Hodge structures can also arise from degenerations of geometric objects. Precisely, given a one dimensional degeneration of smooth projective varieties to a nodal one, we have a Picard-Lefschetz reflection on the cohomology of a generic fiber with respect to the vanishing cycle. This reflection can be regarded as a symmetry of the Hodge structure of a generic fiber. This inspires us to characterize moduli spaces of degenerated objects using modified period maps. In this paper, we work on the case of degenerated sextic curves.

A double cover of $\PP^2$ branched along a nodal sextic curve is a nodal $K3$ surface, with nodes at the preimage of singularities on that curve. After resolving the singularities, we obtain a $K3$ surface with natural lattice polarization (we refer to \cite{dolgachev1996mirror} for the notion of lattice polarization). Along this direction, there are lots of works on moduli spaces of singular sextic curves, especially when the sextic curve is the union of curves with lower degrees. In \cite{matsumoto1992sixlines}, Matsumoto-Sasaki-Yoshida studied the moduli space of (ordered) six lines on $\PP^2$. They realized the $\GIT$ compactification of the moduli space as the Baily-Borel compactification of an arithmetic quotient of type IV domain of dimension 4. It is called Coble variety which is a double cover of $\PP^4$ branched along the Igusa quartic given by equation $(x_0 x_1+x_0 x_2+x_1 x_2-x_3 x_4)^2-4x_0x_1x_2 \sum x_i=0$, see \cite[Example 7]{hunt2000nice}. In \cite{laza2009deformation}, Laza studied the moduli of pairs consisting of a plane quintic curve and a line. An important motivation for Laza's work is the study of the deformations of the minimally elliptic surface singularity $N_{16}$. Recently, Pearlstein and Zhang \cite{pearlstein2019horikawa} related this period map to the study of special Horikawa surfaces and proved generic Torelli theorem. Another case about triples consisting of a plane quartic curve and two lines is studied by Gallardo, Martinez-Garcia and Zhang in \cite{gallardo2017compactifications}. In the  works \cite{matsumoto1992sixlines}, \cite{laza2009deformation}, \cite{gallardo2017compactifications} mentioned above, the GIT compactifications of moduli spaces with respect to suitable polarizations are isomorphic to the Baily-Borel compactifications of the period domains. The main purpose of this paper is to give a uniform and systematic account of these results, hence enlarge the list of examples for which we have identifications between $\GIT$ compactificaitions and Hodge theoretic (Looijenga, most cases Baily-Borel) compactifications.

Notice that there is a natural stratification on the moduli space of nodal plane sextic curves induced by number of nodes. Let $T$ be a singular type, which corresponds to an irreducible component of certain strata. Let $\F_T$ be the moduli space of sextic curves of type $T$ and $\overline{\F}_T$ a compactification constructed via $\GIT$. See \S \ref{subsection: GIT of singular sextic curves} for detailed definition.

We denote by $n$ the number of nodes on a generic sextic curve $C$ of singular type $T$. Let $S$ be the double cover of $\PP^2$ branched along $C$, and $\widetilde{S}\longrightarrow S$ the resolution of the $n$ nodes on $S$. The resolution gives $n$ smooth rational curves (with self-intersection $-2$) on $\widetilde{S}$, while the total transformation of the degree $2$ polarization on $S$ is a norm $2$ class in the Picard lattice $\Pic(\widetilde{S})$. Those $n+1$ classes generate a sublattice (of type $\I_{1, n}(2)$, and not primitive when $C$ has multiple irreducible components) in $\Pic(\widetilde{S})$. In general it is an interesting question to determine the Picard lattice $\Pic(\widetilde{S})$. This question was answered case by case in previous works. The first new result in our work is a uniform treatment and characterization of $\Pic(\widetilde{S})$ for all singular types $T$ of nodal sextic curves. We process in two steps. Firstly, by purely topological argument, we show that (essentially in lemma \ref{lemma: invariant cohomology}) the $n$ $(-2)$-classes and the norm $2$ class generate the rational Picard group $\Pic(\widetilde{S})_{\QQ}=H^2(\widetilde{S}, \QQ)\cap H^{1,1}$ (notice that $C$ is chosen generically). The second step is to describe how the lattice $\I_{1, n}(2)$ is saturated to $\Pic(\widetilde{S})$. This is done in proposition \ref{proposition: m} where we show that the quotient of $\Pic(\widetilde{S})$ by $\I_{1, n}(2)$ is $(\ZZ/2)^{l-1}$, where $l$ is the number of irreducible components of $C$.

Denote by $M$ a lattice isomorphic to $\Pic(\widetilde{S})$ for a generic choice of $C$ of singular type $T$. Then the $K3$ surface associated with any point in $\F_T$ is naturally $M$-polarized. The period map of the $M$-poarized $K3$ surfaces gives a morphism $\Prd\colon \F_T\longrightarrow \Gamma\bs\DD$. Here $\Gamma\bs\DD$ is the global period domain for $M$-polarized $K3$ surfaces. There is a $\Gamma$-invariant hyperplane arrangement $\calH_*$ in $\DD$. We have the Looijenga compactification $\overline{\Gamma\bs\DD}^{\calH_*}$ of $\Gamma\bs(\DD-\calH_*)$, see \cite{looijenga2003typefour}. Our main theorem is:
\begin{thm}[Main Theorem]
\label{theorem: main}
For any singular type $T$, the period map $\Prd\colon \F_T\longrightarrow \Gamma\bs \DD$ is an algebraic open embedding with image $\Prd(\F_T)\subset \Gamma\bs (\DD-\calH_*)$, and the map $\Prd\colon \F_T\to \Gamma\bs (\DD-\calH_*)$ extends to an isomorphism between the GIT compactification and the Looijenga compactification $\Prd\colon \overline{\F}_T\cong \overline{\Gamma\bs \DD}^{\calH_*}$.
\end{thm}

We briefly introduce the strategy of the proof. The injectivity of the period map essentially relies on the global Torelli for $K3$ surfaces (however, some subtlety on lattice-theoretic side needs to be dealt with). We then show that the period map is a bimeromorphism by observing that both two sides are irreducible varieties of the same dimension. The key step is to establish the identification between $\GIT$ compactification and Looijenga compactification. We use the ideas and techniques developed in \cite{yu2018moduli}. Namely, we observe the following commutative diagram:
\begin{equation*}
\begin{tikzcd}
\F_T\arrow{r}{\Prd}\arrow[hook]{d}     & \Gamma\bs(\DD-\calH_*)\arrow[hook]{d} \\
\overline{\F_T}\arrow{d}{j}  & \overline{\Gamma\bs\DD}^{\calH_*}\arrow{d}{\pi}\\
\overline{\calM}\arrow{r}{\Prd} & \overline{\widehat{\Gamma}\bs\widehat{\DD}}^{\calH_{\infty}}.
\end{tikzcd}
\end{equation*}
Here $\overline{\calM}$ and $\overline{\widehat{\Gamma}\bs\widehat{\DD}}^{\calH_{\infty}}$ are the GIT compactification and the Looijenga compactification for the moduli space of smooth sextic curves, see \S\ref{section: review period of degree two K3}. Both $\overline{\F_T}$ and $\overline{\Gamma\bs\DD}^{\calH_*}$ are normal projective varieties. The morphism $j\colon\overline{\F_T}\longrightarrow \overline{M}$ is naturally finite. From the appendix in \cite{yu2018moduli}, the morphism $\pi$ between Looijenga compactifications is also finite. Combining with the fact that the period map on the top row of the diagram is a bimeromorphism, we conclude the existence of a natural identification between $\overline{\F_T}$ and $\overline{\Gamma\bs\DD}^{\calH_*}$.

\begin{rmk}
In this paper we only consider nodal sextic curves. The more general case for sextic curves with $\ADE$ singularities is also interesting but more involved. In particular, Galati \cite{galati2009cusps} found an interesting phenomenon for cuspidal sextic curves. She showed that the moduli space of irreducible sextic curves with $6$ cusps has two irreducible components and each has dimension equal to $7$. Another interesting example related is the Ap\'ery pencil, which is a one dimensional family of sextic curves with certain type of $\ADE$ singularity and symmetry. This pencil appeared in Ap\`ery's proof of irrationality of $\zeta(3)$ in \cite{apery1979irrationalite}. Peters and Beukers \cite{beukers1984afamily} observed that this family is related to $K3$ surfaces polarized by certain lattice of rank $19$. We will return to $\ADE$ situation in future works. 
\end{rmk}

\noindent\textbf{Structure of the paper:}
In \S\ref{section: review period of degree two K3} we introduce the work by Shah and Looijenga on moduli of $K3$ surfaces of degree $2$. In  \S\ref{section: period map} we formulate and prove our main theorem \ref{theorem: main}. In the end of this main section we include a criterion on $\GIT$ side about when $\overline{\Gamma\bs\DD}^{\calH_*}$ is actually a Baily-Borel compactification. Section \ref{section: example} is devoted to examples and applications. Irreducible nodal sextic curves are naturally related with del Pezzo surfaces. We investigate this relation in \S \ref{section: del pezzo}. When a generic sextic curve of singular type $T$ is a union of smooth components, we identify (in \S \ref{subsection: union}) $\overline{\F}_T$ with $\GIT$ quotients of products of projective spaces. See equation \eqref{equation: GITspaces} and table \eqref{table: smooth components}. In our previous work \cite{yu2018moduli}, we considered moduli spaces of symmetric cubic fourfolds. Similar construction is achieved for sextic curves in \S\ref{section: with symmetry}. We end the paper with two examples \ref{example: six involution} and \ref{example: line quintic three} for which the sextic curves are both nodal and symmetric. Interestingly, example \ref{example: line quintic three} gives rise to a ball quotient of dimension $5$, and is related to one of the examples in Deligne-Mostow theory \cite{deligne1986monodromy}.

\noindent \textbf{Acknowledgement:}
The first author is supported by the Simons Collaboration on Homological Mirror Symmetry 2015-2019. He thanks Mao Sheng, Colleen Robles, Matt Kerr, Bong Lian and Shinobu Hosono for helpful discussion about the paper \cite{matsumoto1992sixlines} on moduli of six lines. He also thanks his advisor, Prof. S.-T. Yau for his constant support. The second author thanks his advisor, Eduard Looijenga, for his help and encouragement along the way.

\section{Periods of K3 Surfaces of Degree $2$}
\label{section: review period of degree two K3}
In this section we review the characterization of moduli of sextic curves via period map of $K3$ surfaces. See \cite{shah1980completek3}, \cite[\S8, Theorem 8.6]{looijenga2003typefour}, \cite[\S1.2.3]{laza2016persepectives}.

Let $V$ be a 3-dimensional vector space over $\CC$. We call a sextic curve together with an embedding into $\PP(V)$ a plane sextic curve. The space of plane sextic curves is $\PP\Sym^6(V^*)$. We call a homogeneous polynomial smooth if it defines a smooth hypersurface. Denote by $\PP\Sym^6(V^*)^{sm}$ the subset of $\PP\Sym^6(V^*)$ consisting of smooth sextic polynomials. Consider the action of $\SL(V)$ on $\PP \Sym^6(V^*)$, and define $\calM$ to be the $\GIT$ quotient $\SL(V)\dbs \PP\Sym^6(V^*)^{sm}$. Denote by $\overline{\calM}$ the $\GIT$ compactification of $\calM$, and $\calM_1$ the moduli of sextic curves with at worst simple singularieties.

Consider the double cover $S_C\longrightarrow \PP^2$ branched along a smooth plane sextic curve $C\subset \PP^2$. Let $H\in H^2(S_C,\ZZ)$ be the pull-back of the hyperplane class of $\PP^2$. Then $(S_C, H)$ is a polarized smooth $K3$ surface with $\varphi(H, H)=2$. Here $\varphi$ is the topological intersection pairing on the second cohomology.

The isomorphism type of $(H^2(S_C,\ZZ), \varphi, H)$ does not depend on $C$. Let $(\Lambda, \varphi, H)$ be a triple isomorphic to $(H^2(S_C,\ZZ), \varphi, H)$. Then $(\Lambda, \varphi)\cong U^3\oplus E_8(-1)^2$ is an even unimodular lattice of signature $(3, 19)$. This is usually called the $K3$ lattice. We write $\Lambda$ for $(\Lambda, \varphi)$ for short. Let $\Lambda_0$ be the orthogonal complement of $H$ in $\Lambda$. Let $\widehat{\DD}$ be a component of $\PP\{x\in \Lambda_0\otimes \CC|\varphi(x, x)=0, \varphi(x,\overline{x})>0\}$. This is the period domain for $K3$ surfaces of degree $2$.

The second cohomology of degree two $K3$ family over $\PP\Sym^6(V^*)^{sm}$ gives rise to a variation of Hodge structures on $\PP\Sym^6(V^*)^{sm}$. This induces a period map $\Prd\colon \PP\Sym^6(V^*)^{sm}\to \widehat{\Gamma}\bs \widehat{\DD}$. Here $\widehat{\Gamma}\subset O(\Lambda, \varphi, H)$ is the index two subgroup leaving $\widehat{\DD}$ stable. The quotient space $\widehat{\Gamma}\bs \widehat{\DD}$ is an arithmetic quotient of type $\IV$ domain, hence quasi-projective thanks to the Baily-Borel compactification (see \cite{borel1966}). Since $\SL(V)$ is a connected Lie group acting on the $K3$ family, the holomorphic map $\Prd$ descends to
\begin{equation}
\label{equation: period of sextic curve}
\Prd \colon \SL(V)\dbs \PP\Sym^6(V^*)^{sm}\to \widehat{\Gamma}\bs \widehat{\DD}.
\end{equation}

We have two $\widehat{\Gamma}$-invariant hyperplane arrangements $\calH_{\Delta}$ and $\calH_{\infty}$ in $\widehat{\DD}$, which correspond to two $O(\Lambda,\varphi, H)$-orbits of vectors in $\Lambda_0$ with self-intersection $-2$. The vectors defining $\calH_{\Delta}$ have divisibility $1$, and those defining $\calH_{\infty}$ have divisibility $2$. We have the Looijenga compactification of $\widehat{\Gamma}\bs (\widehat{\DD}-\calH_{\infty})$, denoted by $\overline{\widehat{\Gamma}\bs \widehat{\DD}}^{\calH_{\infty}}$. See \cite{looijenga2003typefour}.

From \cite{shah1980completek3} and \cite{looijenga2003typefour}, we have
\begin{thm}[Shah, Looijenga]
\label{theorem: shah}
The period map \eqref{equation: period of sextic curve} defined above is an algebraic open embedding with image the complement of $\widehat{\Gamma}\bs (\calH_{\Delta} \cup \calH_{\infty})$. Moreover $\Prd$ extends to an isomorphism $\calM_1\cong \widehat{\Gamma}\bs (\widehat{\DD}-\calH_{\infty})$, and further to $\overline{\calM}\cong \overline{\widehat{\Gamma}\bs \widehat{\DD}}^{\calH_{\infty}}$.
\end{thm}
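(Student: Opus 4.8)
The plan is to prove the three assertions in turn, using the global Torelli theorem together with surjectivity of the $K3$ period map as the engine for the open--embedding statement, then Shah's stability analysis together with Looijenga's compactification machinery for the two boundary extensions. The guiding principle throughout is that the linear system $|H|$ reconstructs the branch sextic $C$ from the polarized $K3$ surface $(S_C,H)$, so that statements about sextics translate into lattice-theoretic statements about the position of the period relative to the root arrangements $\calH_{\Delta}$ and $\calH_{\infty}$.

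First I would show that \eqref{equation: period of sextic curve} is injective. If two smooth sextics $C,C'$ have Hodge-isometric primitive cohomology under an isometry of $O(\Lambda,\varphi,H)$ fixing $H$, the global Torelli theorem produces an isomorphism $(S_C,H)\cong(S_{C'},H)$ of polarized $K3$ surfaces; since $C$ and $C'$ are the branch loci of the morphisms $\phi_{|H|}$ and a polarized isomorphism is compatible with these linear systems, it descends to an element of $\PSL(V)$ carrying $C$ to $C'$. Local Torelli shows the map is an immersion, and surjectivity of the $K3$ period map pins down the image: a point of $\widehat{\DD}$ avoids $\calH_{\Delta}$ exactly when the polarization has no orthogonal $(-2)$-class of divisibility $1$, equivalently $H$ is ample and $\phi_{|H|}$ contracts nothing, so the branch sextic is smooth; and it avoids $\calH_{\infty}$ exactly when $H$ is not of unigonal type, so that $\phi_{|H|}$ is a genuine double cover of $\PP^2$ branched over a sextic rather than degenerating to the unigonal map. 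Hence the image is $\widehat{\Gamma}\bs(\widehat{\DD}-(\calH_{\Delta}\cup\calH_{\infty}))$, and algebraicity of the embedding follows from Borel's extension theorem applied to the arithmetic quotient $\widehat{\Gamma}\bs\widehat{\DD}$.

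Next I would extend across $\calH_{\Delta}$ to obtain $\calM_1\cong\widehat{\Gamma}\bs(\widehat{\DD}-\calH_{\infty})$. A sextic with at worst simple singularities yields a $K3$ surface with only ADE singularities, whose minimal resolution $\widetilde{S}_C$ is a smooth $K3$ surface carrying the $(-2)$-classes of the exceptional curves; these are precisely the roots cut out by $\calH_{\Delta}$. One still has a big and nef polarization $H$ of square $2$, so the period lies in $\widehat{\DD}$, avoids $\calH_{\infty}$, and now meets $\calH_{\Delta}$ along the sublattice spanned by the vanishing cycles. I would check that the resulting map on $\calM_1$ is a bijective morphism onto $\widehat{\Gamma}\bs(\widehat{\DD}-\calH_{\infty})$ -- injectivity again from Torelli for the resolutions plus recovery of $C$ from $\phi_{|H|}$, surjectivity from the $K3$ surjectivity theorem since every point off $\calH_{\infty}$ gives a big and nef $H$ hence a double cover with at worst ADE branch singularities -- and, invoking normality of both sides together with Zariski's main theorem, conclude that it is an isomorphism.

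The main obstacle is the final extension to the $\GIT$ compactification, $\overline{\calM}\cong\overline{\widehat{\Gamma}\bs\widehat{\DD}}^{\calH_{\infty}}$. Here I would first recall Shah's classification of the $\SL(V)$-semistable sextics and the orbit-closure relations among the strictly semistable and boundary points, distinguishing the degenerations whose limit Hodge structures remain insignificant (giving insertable $K3$ limits) from those -- the triple conic and related unigonal degenerations -- which must be modified. On the period side, Looijenga's construction of $\overline{\widehat{\Gamma}\bs\widehat{\DD}}^{\calH_{\infty}}$ from \cite{looijenga2003typefour} replaces a neighborhood of the arrangement $\calH_{\infty}$ by a modification governed by the rational isotropic subspaces of $\Lambda_0$, producing a projective compactification. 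The crux is to match these two boundary descriptions: I would stratify both $\overline{\calM}$ and $\overline{\widehat{\Gamma}\bs\widehat{\DD}}^{\calH_{\infty}}$, build a stratum-by-stratum correspondence, and use the already-established isomorphism over the interior together with properness of both sides to extend the map, finally exploiting the semi-ample (partly Baily--Borel) structure of the Looijenga compactification to upgrade the extension from a bijective morphism to an isomorphism of projective varieties. This matching of the $\GIT$ boundary with the Looijenga boundary is where the real work lies, and is exactly the content supplied by combining \cite{shah1980completek3} with \cite{looijenga2003typefour}.
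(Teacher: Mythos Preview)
The paper does not prove this theorem; it is stated as a quoted result with the line ``From \cite{shah1980completek3} and \cite{looijenga2003typefour}, we have'' and no further argument. Your sketch is a faithful outline of the ideas actually contained in those references---Torelli plus surjectivity for the open embedding, Shah's stability analysis for the ADE extension, and Looijenga's arrangement compactification for the final isomorphism---so there is nothing to compare beyond noting that you have supplied what the paper deliberately outsources.
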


\begin{rmk}
\label{remark: description of looi compactification}
The Looijenga compactification $\overline{\widehat{\Gamma}\bs\widehat{\DD}}^{\calH_{\infty}}$ can be constructed from the Baily-Borel compactification $\overline{\widehat{\Gamma}\bs\widehat{\DD}}^{bb}$ in the following way. First we blow up some boundary components of $\overline{\widehat{\Gamma}\bs\widehat{\DD}}^{bb}$ to make the closure of each hypersurface in $\widehat{\Gamma}\bs\calH_{\infty}$ Cartier. Then we contract the strict transform of $\overline{\widehat{\Gamma}\bs\calH_{\infty}}$ to a point. The corresponding point in $\overline{\calM}$ is a semi-stable sextic, represented by the cube of a quadratic polynomial.
\end{rmk}

\section{Moduli of Nodal Sextic Curves}
\label{section: period map}
In this section, we will first characterize the moduli spaces of nodal sextic curves via $\GIT$ constructions (\S \ref{subsection: GIT of singular sextic curves}). Using the Hodge structures of certain $K3$ surfaces naturally associated with nodal sextic curves, we can realize those moduli spaces as arithmetic quotients of type $\IV$ domains (\S \ref{subsection: period map}).

Consider the subspace $\Sigma^n$ of $\PP\Sym^6(V^*)$ consisting of plane sextic curves with $n$ different nodes. A singular type $T$ is defined to be an irreducible component of $\Sigma^n$. We also denote by $\Sigma_T$ to be the irreducible component corresponding to the type $T$. A plane sextic curve in $\Sigma_T$ is called to have singular type $T$. In the rest of this section, we fix $T$ and write $\Sigma=\Sigma_T$ for short.

\subsection{GIT construction of nodal sextic curves}
\label{subsection: GIT of singular sextic curves}
Let $\overline{\Sigma}$ be the Zariski closure of $\Sigma$ and let $f\colon\widetilde{\Sigma}\to \overline{\Sigma}$ be the normalization of $\overline{\Sigma}$. Denote by $\calO(1)$ the natural polarization on $\overline{\Sigma}$ induced from the ambient space $\PP\Sym^6(V^*)$. The group $\SL(V)$ acts equivariantly on the polarized varieties 
\begin{equation}
\label{equation: morphisms}
(\widetilde{\Sigma}, f^*\calO(1))\to (\overline{\Sigma}, \calO(1))\hookrightarrow (\PP\Sym^6(V^*),\calO(1)).
\end{equation} 
Denote by $\overline{\Sigma}^{ss}, \widetilde{\Sigma}^{ss}$ and $\PP\Sym^6(V^*)^{ss}$ the spaces of semi-stable points with respect to the actions of $\SL(V)$. By \cite{severi1968curve}, $\Sigma$ is smooth, hence can be identified with its preimage in $\widetilde{\Sigma}$. By \cite{shah1980completek3}, the points in $\Sigma$ is stable under the action of $\SL(V)$. 

\begin{de}
We define $\F=\SL(V)\dbs \Sigma$ to be the moduli space of singular sextic curves of type $T$. Define $\overline{\F}=\SL(V)\dbs \widetilde{\Sigma}^{ss}$, which is a natural compactification of $\F$.
\end{de}
By \cite[theorem 1.19]{mumford1994geometric}, $\widetilde{\Sigma}^{ss}$ is the preimage of $\overline{\Sigma}^{ss}$ in $\widetilde{\Sigma}$. Taking quotient of \eqref{equation: morphisms} by $\SL(V)$, we obtain morphisms among GIT quotients:
\begin{equation*}
j\colon \overline{\F}=\SL(V)\dbs \widetilde{\Sigma}^{ss} \to \SL(V)\dbs\overline{\Sigma}^{ss} \hookrightarrow \overline{\calM}=\SL(V)\dbs \PP\Sym^6(V^*)^{ss}
\end{equation*}
with the first map the normalization of $\SL(V)\dbs\overline{\Sigma}^{ss}$. 
In conclusion, we have
\begin{prop}
\label{proposition: j}
The morphism $j\colon \overline{\F}\to \overline{\calM}$ is a normalization of its image $j(\overline{\F})$.
\end{prop}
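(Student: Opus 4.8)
The plan is to verify, for the finite morphism $j$, the three properties that characterize a normalization: that the source $\overline{\F}_T$ is normal, that $j$ is finite, and that $j$ is birational onto its image $Z := j(\overline{\F}_T)$. Finiteness is already established above, and $Z$ is closed and irreducible since $j$ is proper and $\overline{\F}_T$ is irreducible (being a GIT quotient of the irreducible $\widetilde{\Sigma}^{ss}$); I give $Z$ its reduced structure. Granting normality and birationality, the universal property of normalization — a finite birational morphism from a normal variety onto a reduced irreducible variety realizes it as the normalization — then yields the proposition.

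For normality I would argue as follows. The normalization $\widetilde{\Sigma}$ of $\overline{\Sigma}$ is by construction normal, hence so is its open subset $\widetilde{\Sigma}^{ss}$. The GIT quotient of a normal variety by a reductive group is again normal: locally the quotient is $\Spec A^{\SL(V)}$ for a normal domain $A$, and any element of $\mathrm{Frac}(A^{\SL(V)})$ integral over $A^{\SL(V)}$ is integral over $A$, hence lies in $A$ and is invariant, so lies in $A^{\SL(V)}$. Therefore $\overline{\F}_T = \SL(V)\dbs \widetilde{\Sigma}^{ss}$ is normal.

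For birationality I would exploit that $\Sigma$ consists of stable points and is dense in $\widetilde{\Sigma}$. Since $\Sigma$ is open dense in $\overline{\Sigma}$ and the normalization induces an isomorphism over $\Sigma$ (its preimage), that preimage is open dense in $\widetilde{\Sigma}$ and lies in the stable locus; consequently $\F_T = \SL(V)\dbs \Sigma$ is open dense in $\overline{\F}_T$, and $j(\F_T)$ is dense in $Z$. By hypothesis $j\colon \F_T\to \overline{\calM}$ is injective, so $j$ is one-to-one over a dense open subset of $Z$. Working over $\CC$, the finite extension $\CC(Z)\hookrightarrow \CC(\overline{\F}_T)$ is separable, and its degree equals the cardinality of a generic fibre, which is therefore $1$; hence $j$ is birational onto $Z$.

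Combining the three points proves that $j$ realizes $\overline{\F}_T$ as the normalization of $Z$. The step needing the most care is birationality, and more precisely the bookkeeping of semistability under the finite equivariant morphism $\widetilde{\Sigma}\to\overline{\Sigma}\hookrightarrow\PP\Sym^6(V^*)$: one must check that $\widetilde{\Sigma}^{ss}$ is exactly the preimage of the semistable locus (so that the quotients match up and $j$ is finite rather than merely quasi-finite), and that the GIT quotient of the closed invariant subvariety $\overline{\Sigma}$ embeds as a closed subvariety of $\overline{\calM}$, which identifies $Z$ with $\SL(V)\dbs\overline{\Sigma}^{ss}$ and makes the factorization $\overline{\F}_T\to \SL(V)\dbs\overline{\Sigma}^{ss}\hookrightarrow\overline{\calM}$ transparent. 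These are standard facts about linearized reductive actions, but they are exactly the inputs guaranteeing that $j$ is finite and generically one-to-one.
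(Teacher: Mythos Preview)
Your proposal is correct and follows the same approach as the paper. In the paper the proposition is stated as a direct consequence of the paragraph preceding it: the source is a GIT quotient of the normal variety $\widetilde{\Sigma}$ (hence normal), the equivariant polarized map $(\widetilde{\Sigma},\calO(1))\to(\PP\Sym^6(V^*),\calO(1))$ yields a finite morphism of GIT quotients, and the restriction to the stable open $\F_T$ is injective; you have simply spelled out these three ingredients and the concluding universal-property step, including the standard GIT facts (preservation of normality under reductive quotients, compatibility of semistable loci under finite equivariant morphisms with pulled-back linearization) that the paper leaves implicit.
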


\subsection{Period domain and period map}
\label{subsection: period map}
Let $F\in \Sym^6(V^*)$ with $Z(F)\in \Sigma$. We denote by $S_F$ the double cover of $\PP(V)$ branched along $Z(F)$. Then $S_F$ is a $K3$ surface with $n$ nodal singularities on the preimage of $Z(F)$. Let $Q_F$ be the blowup of $\PP(V)$ at the $n$ nodes of $Z(F)$. Assume $Z(F)$ has $l$ irreducible components. The proper transforms of those components in $Q_F$ are $l$ disjoint curves, denoted by $C_1, C_2, \cdots, C_l$. Let $\widetilde{S}_F$ be the double cover of $Q_F$ branched along $\bigcup_{i=1}^l C_i$. Then $\widetilde{S}_F$ is a smooth $K3$ surface which resolves the nodal sigularities of $S_F$. There is an anti-symplectic involution $\iota$ on $\widetilde{S}_F$ induced by the double-cover construction. We use the same notation to denote the induced involution of $H^2(\widetilde{S}_F)$. Define
\begin{equation*}
M_F \coloneqq \{x\in H^2(\widetilde{S}_F,\ZZ)|\iota x=x\}
\end{equation*}
which is a primitive sublattice of $H^2(\widetilde{S}_F,\ZZ)$. Since the action of $\iota$ on $\widetilde{S}_F$ is anti-symplectic, the lattice $M_F$ has signature $(1, n^{\prime})$. Here $n^{\prime}$ is a non-negative integer and we will show $n^{\prime}=n$ from the next lemma.

\begin{lem}
\label{lemma: invariant cohomology}
Let $X\longrightarrow Y$ be a branched cyclic cover between two closed manifolds with branch locus $B$ a closed submanifold of $Y$ with codimension $2$. Suppose $\iota$ is a generator of the Deck transformation group, and denote by $H^*(X,\QQ)^{\iota}$ the invariant subspace of $H^*(X,\QQ)$ under the induced action of $\iota$. Then we have an isomorphism $H^*(Y, \QQ)\cong H^*(X,\QQ)^{\iota}$.
\end{lem}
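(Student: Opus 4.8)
The plan is to observe that a branched cyclic cover presents $Y$ as the quotient $X/G$, where $G=\langle\iota\rangle$ is the (finite cyclic, say of order $m$) Deck transformation group, and then to run the standard argument that rational cohomology of a finite quotient is the invariant cohomology. Writing $\pi\colon X\to Y$ for the covering map, the first step is to record that $Y\cong X/G$ as topological spaces: over $Y\setminus B$ the map $\pi$ is a genuine regular $G$-covering, on which $G$ acts simply transitively on each fibre, while over $B$ the fibre is again a single $G$-orbit (with nontrivial point stabilizers coming from the ramification). Thus in all cases the $G$-orbits on $X$ are exactly the fibres of $\pi$, which is precisely the statement that $\pi$ induces $X/G\xrightarrow{\ \sim\ }Y$.

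Next I would pass to sheaf cohomology. Since $X$ is a closed, hence compact, manifold, the map $\pi$ is proper with finite fibres, so it is finite and $R^i\pi_*\QQ_X=0$ for $i>0$: by proper base change the stalk of $R^i\pi_*\QQ_X$ at $y$ computes $H^i$ of the finite set $\pi^{-1}(y)$, which vanishes in positive degree. Hence the Leray spectral sequence collapses and yields a $G$-equivariant isomorphism $H^*(X,\QQ)\cong H^*(Y,\pi_*\QQ_X)$.

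The crucial step, and the one I expect to be the main obstacle, is to identify the invariant subsheaf $(\pi_*\QQ_X)^G$ with the constant sheaf $\QQ_Y$. For each $y\in Y$ the stalk of $\pi_*\QQ_X$ is the permutation module on the fibre $\pi^{-1}(y)$, which is a single $G$-orbit, so it is a transitive permutation $\QQ[G]$-module whose invariants are one–dimensional; over the unramified locus this says that the rank-$m$ local system $\pi_*\QQ_X$ has rank-one invariants, and over $B$ it follows from the connectedness of the local model $z\mapsto z^m$ in the normal direction. The codimension-two and submanifold hypotheses on $B$ are exactly what guarantee this standard normal model, and checking the invariants along $B$ is the heart of the argument; the conclusion is that the adjunction unit $\QQ_Y\to(\pi_*\QQ_X)^G$ is an isomorphism of sheaves.

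Finally, because $|G|$ is invertible in $\QQ$, Maschke's theorem makes the invariants functor $(-)^G$ on $\QQ[G]$-modules exact, so it commutes with sheaf cohomology. Combining the three identifications gives
$$H^*(Y,\QQ)\;\cong\;H^*\!\big(Y,(\pi_*\QQ_X)^G\big)\;\cong\;H^*(Y,\pi_*\QQ_X)^G\;\cong\;H^*(X,\QQ)^{\iota},$$
which is the asserted isomorphism. As an alternative that avoids the sheaf bookkeeping, one could triangulate $Y$ so that $B$ is a subcomplex, lift this to a $G$-equivariant triangulation of $X$, and apply the averaging projector $\tfrac{1}{|G|}\sum_{g\in G}g^*$ directly on simplicial cochains to realize $\pi^*$ as an isomorphism onto the invariant cochains; the content of the two approaches is the same.
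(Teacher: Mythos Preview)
Your argument is correct. The identification $Y\cong X/G$, the collapse of the Leray spectral sequence for the finite map $\pi$, the stalkwise computation of $(\pi_*\QQ_X)^G\cong\QQ_Y$, and the exactness of $(-)^G$ over $\QQ$ (equivalently, the averaging idempotent splits $\pi_*\QQ_X$ as $\QQ_Y\oplus\ker e$, and cohomology respects direct summands) combine exactly as you claim.

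The paper takes a different and more elementary route: it decomposes $Y$ into a tubular neighbourhood $N_Y(B)$ and its complement $Y^\circ$, writes down the Mayer--Vietoris sequences for $X$ and $Y$, observes that $N_X(B)\to N_Y(B)$ is a homotopy equivalence while $X^\circ\to Y^\circ$ and $S_X(B)\to S_Y(B)$ are unramified regular covers (so pullback identifies $H^*$ of the base with the $\iota$-invariants upstairs), and then applies the five lemma. This avoids sheaves and spectral sequences entirely, at the cost of having to check three separate pieces and appeal to the five lemma. Your approach is more conceptual and immediately generalizes to any finite group acting on $X$ with quotient $Y$, not just cyclic $G$; the paper's approach stays within singular cohomology and makes the role of the codimension-two hypothesis visible in a very concrete way (it is what makes $N_X(B)\simeq N_Y(B)$). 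Either proof is perfectly adequate here.
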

\begin{proof}
The preimage of $B$ in $X$ is also denoted by $B$. Let $N_Y(B)$ be a tubular neighbourhood of $B$ in $Y$, and $S_Y(B)$ be the boundary of $N_Y(B)$ which is a circle bundle over $B$. Take $Y^{\circ}\coloneqq Y-N_Y(B)$. Let $N_X(B), S_X(B)$ and $X^{\circ}$ be the preimage of $N_Y(B), S_Y(B)$ and $Y^{\circ}$ in $X$, respectively. The Mayer-Vietoris exact sequence gives the following commutative diagram:
\begin{equation}
\label{diagram: five lemma}
\begin{tikzcd}
\cdots \arrow{r} & H^k(Y) \arrow{d}\arrow{r} & H^k(N_Y(B))\oplus H^k(Y^{\circ})\arrow{d}\arrow{r} & H^k(S_Y(B)) \arrow{d} \arrow{r} & \cdots \\
\cdots \arrow{r} & H^k(X)\arrow{r} & H^k(N_X(B))\oplus H^k(X^{\circ})\arrow{r}  & H^k(S_X(B)) \arrow{r} & \cdots
\end{tikzcd}
\end{equation}
where all the cohomology groups have $\QQ$-coefficients. The second long exact sequence in diagram \eqref{diagram: five lemma} admits an induced action by $\iota$. The image of the first exact sequence in the second is contained in the $\iota$-invariant part. Since $N_X(B)\longrightarrow N_Y(B)$ is a homotopy equivalence, the morphism $H^k(N_Y(B))\longrightarrow H^k(N_X(B))$ is an isomorphism. The maps $X^{\circ}\longrightarrow Y^{\circ}$ and $S_X(B)\longrightarrow S_Y(B)$ are regular covers, hence the maps $H^k(Y^{\circ})\longrightarrow H^k(X^{\circ})^{\iota}$ and $H^k(S_Y(B))\longrightarrow H^k(S_X(B))^{\iota}$ are isomorphisms. Therefore we have isomorphism $H^k(Y,\QQ)\cong H^k(X,\QQ)^{\iota}$ by five lemma.
\end{proof}

Let $H$ be the total transform of the hyperplane class in $Q_F$ and $E_i$ ($1\leq i\leq n$) be the exceptional divisors in $Q_F$. These $n+1$ cycles form an integral basis for the lattice $H^2(Q_F,\ZZ)$. By lemma \ref{lemma: invariant cohomology}, the rank of $M$ is equal to $n+1$. Consider the embedding of lattice $H^2(Q_F, \ZZ)(2)$\footnote{For a lattice $L$, we denote by $L(n)$ the lattice with the same underlying abelian group while the intersection pairing rescaled by $n$.} into $H^2(\widetilde{S}_F,\ZZ)$, which has image contained in $M$. We use $[H]$ and $[E_i]$ to denote the corresponding cohomology classes in $H^2(\widetilde{S}_F,\ZZ)$. The preimage of $C_i$ in $\widetilde{S}_F$ is still denoted by $C_i$. The proper transform of $\calO_{Q_F}(C_i)$ to $\widetilde{S}_F$ is $\calO_{\widetilde{S}_F}(2C_i)$. We denote by $[C_i]$ the first Chern class of $\calO_{\widetilde{S}_F}(C_i)$. Let $d_i$ be the degree of the image of $C_i$ in $\PP(V)$. Then
\begin{equation}
\label{equation: expression}
[C_i]={\frac{d_i[H]-\sum_{j=1}^n a_{ij}[E_j]}{2}}.
\end{equation}
Here $a_{ij}=1$ if $E_j$ is from blowing up of an intersection point between image of $C_i$ in $\PP(V)$ and another component of $Z(F)$, $a_{ij}=2$ if $E_j$ is from blowing up of a node on the image of $C_i$ in $\PP(V)$ and $a_{ij}=0$ otherwise. We have the following description of $M_F$.

\begin{prop}
\label{proposition: m}
The primitive sublattice $M_F$ is generated by $H^2(Q_F, \ZZ)(2)$ and $[C_1]$, $[C_2]$, $\cdots$, $[C_l]$. Moreover
\begin{equation*}
\frac{M_F}{H^2(Q_F, \ZZ)(2)}\cong (\ZZ/2)^{l-1}.
\end{equation*}
\end{prop}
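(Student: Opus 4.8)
The plan is to set $N \coloneqq \langle H^2(Q_F,\ZZ)(2),\, [C_1],\dots,[C_l]\rangle \subseteq M_F$ — the inclusion holding because each ramification curve $\widetilde C_i\subset\widetilde S_F$ is pointwise fixed by $\iota$, so $[C_i]\in M_F$ — and to prove separately that $N/H^2(Q_F,\ZZ)(2)$ has the claimed form and that $N=M_F$. The classes $[H],[E_j]$ give a $\ZZ$-basis of $H^2(Q_F,\ZZ)(2)$, which I will use throughout to detect membership in that sublattice.

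For the quotient (the ``moreover'' clause), I would read off from \eqref{equation: expression} that $2[C_i]=d_i[H]-\sum_j a_{ij}[E_j]\in H^2(Q_F,\ZZ)(2)$, so that $N/H^2(Q_F,\ZZ)(2)$ is an $\FF_2$-vector space generated by the images $\bar c_i$ of the $[C_i]$. Summing \eqref{equation: expression} over $i$ and using $\sum_i d_i=6$ together with $\sum_i a_{ij}=2$ at every node (a node lies either on one component with multiplicity $2$ or on two components each with multiplicity $1$) gives $\sum_i[C_i]=3[H]-\sum_j[E_j]\in H^2(Q_F,\ZZ)(2)$, i.e. $\sum_i\bar c_i=0$. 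To see this is the only relation I would show that $\sum_{i\in S}[C_i]\notin H^2(Q_F,\ZZ)(2)$ for every proper nonempty $S\subsetneq\{1,\dots,l\}$: since $Z(F)$ is a connected plane curve whose distinct components meet only at nodes, the incidence graph on the components is connected, so some node $p_j$ joins a component $C_a$ with $a\in S$ to a component $C_b$ with $b\notin S$; only $C_a,C_b$ pass through $p_j$, so $\sum_{i\in S}a_{ij}=a_{aj}=1$ is odd and the coefficient of $[E_j]$ in $\sum_{i\in S}[C_i]$ is not an integer. Hence the relations among the $\bar c_i$ are generated by $\sum_i\bar c_i=0$, giving $N/H^2(Q_F,\ZZ)(2)\cong(\ZZ/2)^{l-1}$.

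The substantive part is the generation statement $M_F=N$. First I would observe that since $\iota$ is anti-symplectic, $M_F\otimes\CC$ lies in $H^{1,1}(\widetilde S_F)$, so by the Lefschetz $(1,1)$-theorem $M_F\subseteq \mathrm{NS}(\widetilde S_F)$; thus every $\delta\in M_F$ is an $\iota$-invariant divisor class. Taking the $\iota$-invariant ample class $h=\pi^*(\text{ample on }Q_F)\in H^2(Q_F,\ZZ)(2)$, for $m\gg0$ both $\delta+mh$ and $mh$ are invariant and ample, so $\delta$ is a difference of invariant ample classes; it then suffices to represent an invariant ample class by an $\iota$-invariant effective divisor and to decompose that divisor. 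An invariant ample line bundle admits an $\iota$-linearization (the obstruction lies in $H^2(\ZZ/2,\CC^*)=0$), and any eigenvector in its nonzero space of sections cuts out an $\iota$-invariant divisor in the prescribed class. Finally, an $\iota$-invariant prime divisor is either one of the fixed ramification curves $\widetilde C_i$, with class $[C_i]$, or maps two-to-one onto its image and is the pullback of a curve on $Q_F$, with class in $\pi^*H^2(Q_F,\ZZ)=H^2(Q_F,\ZZ)(2)$. Combining these, $\delta\in N$, whence $M_F=N$ and the proposition follows.

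I expect the generation step to be the main obstacle: the content is precisely that an invariant integral class is not merely fixed by $\iota$ but is genuinely represented by an $\iota$-\emph{invariant divisor}, which is what allows the reduction to pullbacks and ramification curves. As an alternative, more lattice-theoretic route I would embed $M_F/H^2(Q_F,\ZZ)(2)\hookrightarrow H^2(Q_F,\FF_2)$ by $\beta\mapsto\pi_*\beta \bmod 2$ — its kernel is exactly $\pi^*H^2(Q_F,\ZZ)$ because $\pi^*\pi_*=1+\iota^*$ and $\pi_*\pi^*=2$ — and then match dimensions using that $M_F$ is $2$-elementary together with Nikulin's count of the fixed curves of a non-symplectic involution, which pins $\dim_{\FF_2}\!\big(M_F/H^2(Q_F,\ZZ)(2)\big)=l-1$ and again forces $N=M_F$.
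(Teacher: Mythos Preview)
Your proof is correct but proceeds by a genuinely different route than the paper for the crucial generation step $M_F=N$.

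The paper argues \emph{topologically}: writing $M_1=H^2(Q_F,\ZZ)(2)$, it first determines the size of $M_F/M_1$ by computing the kernel of $H^2(Q_F,\ZZ/2)\to H^2(\widetilde{S}_F,\ZZ/2)$, which by a result of Lee on double branched covers is $H^1(Q_F,\bigcup_i C_i;\ZZ/2)\cong(\ZZ/2)^{l-1}$; since $M_F$ is the saturation of $M_1$ (same rank by Lemma~\ref{lemma: invariant cohomology}, and $M_F$ is primitive), this kernel has the same $\FF_2$-dimension as $M_F/M_1$. Only then does the paper exhibit $l-1$ independent classes $[C_1],\dots,[C_{l-1}]$ in $N/M_1$ (by the same node-coefficient trick you use), forcing $N=M_F$ by a cardinality count.

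You instead argue \emph{geometrically}: after computing $N/M_1\cong(\ZZ/2)^{l-1}$ combinatorially, you show directly that every $\iota$-invariant integral class is represented by an $\iota$-invariant divisor --- via Lefschetz $(1,1)$, the vanishing $H^2(\ZZ/2,\CC^*)=0$ giving a linearization, and an eigenvector in $H^0$ --- and then decompose that divisor into ramification curves and pullbacks. This is self-contained (no appeal to Lee's theorem or to Lemma~\ref{lemma: invariant cohomology}) and arguably more transparent, though it uses that the ambient surface is algebraic; the paper's argument is purely topological. Your connectedness-of-the-dual-graph argument for the single relation $\sum_i\bar c_i=0$ is also slightly cleaner than the paper's choice of specific intersection points with $C_l$. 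One small point worth making explicit: when decomposing the invariant effective divisor, $\iota$ only permutes its prime components, so you should group them into $\iota$-orbits --- a swapped pair $D+\iota(D)$ has class $\pi^*[\pi(D)]\in M_1$, while an invariant prime component is handled as you describe.
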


\begin{proof}
Take $M_1=H^2(Q_F, \ZZ)(2)$ and $\widetilde{M}$ the sublattice generated by $M_1$ and $[C_1], [C_2], \cdots, [C_l]$. We have $\widetilde{M}\subset M_F$ with the same rank. Since $H^2(Q_F, \ZZ)$ is a unimodular lattice, we have $M_1^*/M_1\cong (\ZZ/2)^{n+1}$. Since $M_1\subset\widetilde{M}\subset M_F\subset M_1^*$, the quotients $M_F/M_1$ and $\widetilde{M}/M_1$ are both $\ZZ/2$-vector spaces. Tensoring the short exact sequence
\begin{equation*}
0\to H^2(Q_F, \ZZ)\to H^2(\widetilde{S}_F, \ZZ)\to H^2(\widetilde{S}_F, \ZZ)/M_1\to 0
\end{equation*}
with $\ZZ/2$, we obtain a long exact sequence
\begin{equation}
\label{sequence: tor}
0\to \Tor(H^2(\widetilde{S}_F, \ZZ)/M_1, \ZZ/2) \to H^2(Q_F, \ZZ/2)\to H^2(\widetilde{S}_F, \ZZ/2)\to (H^2(\widetilde{S}_F, \ZZ)/M_1)\otimes \ZZ/2\to 0.
\end{equation}
So the $\ZZ/2$-rank of the kernel of $H^2(Q_F, \ZZ/2)\to H^2(\widetilde{S}_F, \ZZ/2)$ is equal to the $\ZZ/2$-rank of the torsion part $(H^2(\widetilde{S}_F, \ZZ)/M_1)_{\text{tor}}$ of $H^2(\widetilde{S}_F,\ZZ)/M_1$. On the other hand, according to theorem 1 in \cite{lee1995doublebranchedcover}, we have the exact sequence
\begin{equation}
\label{sequence: lee}
0\to H^1(Q_F, \bigcup_{i=1}^l C_i, \ZZ/2)\to H^2(Q_F, \ZZ/2)\to H^2(\widetilde{S}_F, \ZZ/2).
\end{equation}
The long exact sequence for relative cohomology groups
\begin{equation}
\label{sequence: relative}
0\to H^0(Q_F,\ZZ/2)\to H^0(\bigcup_{i=1}^l C_i, \ZZ/2)\to H^1(Q_F, \bigcup_{i=1}^l C_i, \ZZ/2)\to 0
\end{equation}
implies that the rank of $H^1(Q_F, \bigcup_{i=1}^l C_i, \ZZ/2)$ is $l-1$. Combining sequences \eqref{sequence: tor}, \eqref{sequence: lee} and \eqref{sequence: relative}, we conclude that the $\ZZ/2$-rank of $(H^2(\widetilde{S}_F, \ZZ)/M_1)_{\text{tor}}$ is equal to $l-1$. Since $M_F$ is the saturation of $M_1$ in $H^2(\widetilde{S}_F, \ZZ)$, we have $M_F/M_1\cong (\ZZ/2)^{l-1}$ as abelian groups.

We claim that $[C_1], [C_2], \dots, [C_{l-1}]\in \widetilde{M}/M_1$ are $\ZZ/2$-independent.

Apparently $l\le 6$. For $l=1$, the claim is clear. Suppose $l\ge 2$. For any $i\in \{1,2,\dots, l-1\}$, take $s(i)\in\{1,2,\dots,n\}$, such that $E_{s(i)}$ is from blowing up of an intersection point of images of $C_i$ and $C_l$ in $\PP(V)$. Then $a_{is(i)}=1$. Thus the coefficient of $[E_{s(i)}]$ in the expression \eqref{equation: expression} of $[C_i]$ is equal to $-\frac{1}{2}$. Therefore, any nontrivial $\ZZ/2$-linear combination of $[C_1], [C_2],\dots,[C_{l-1}]$ can not vanish. The claim follows.

Then the $\ZZ/2$-rank of $\widetilde{M}/M_1$ is at least $l-1$. Since $\widetilde{M}/M_1\subset M_F/M_1 \cong (\ZZ /2)^{l-1}$, we have $\widetilde{M}/M_1\cong (\ZZ/2)^{l-1}$ and hence $M_F=\widetilde{M}$.
\end{proof}

The isomorphism type of $(H^2(\widetilde{S}_F, \ZZ), M_F, H, \iota)$ does not depend on the choice of $F\in \Sigma$. Let $(\Lambda, M, H, \iota)$ be an abstract data isomorphic to $(H^2(\widetilde{S}_F, \ZZ), M_F, H, \iota)$. Let $\Lambda_T$ be the orthogonal complement of $M$ in $\Lambda$. We use the same notation for the corresponding elements of $[H],[E_i]$ in $M$. We have a family $p\colon \calS\to \Sigma$ of smooth $K3$ surfaces with anti-symplectic involution $\iota$. The Hodge structures of $K3$ surfaces give rise to a variation of polarized Hodge structures on the local system $R^2p_*(\ZZ)$ with induced involution $\iota$. Let $\HH$ be $(-1)$-eigensubsheaf of $R^2p_*(\ZZ)$. We have a subvariation of Hodge structures on $\HH$, which is naturally polarized by the topological intersection pairing $\varphi$.

\begin{de}
We define the period domain $\DD=\DD_T$ of type $T$ to be the connected component of
\begin{equation*}
\PP\{x\in \Lambda_T\otimes \CC|\varphi(x, x)=0, \varphi(x,\overline{x})>0\}
\end{equation*}
contained in $\widehat{\DD}$.
The arithmetic group $\Gamma$ is defined to be the centralizer of $\iota$ in $\widehat{\Gamma}$. Equivalently, $\Gamma=\{\sigma\in\widehat{\Gamma}\big{|}\sigma(M)=M\}$ can be viewed as the group of automorphisms of the data $(\Lambda, M, H, \iota)$ with spinor norm $1$.
\end{de}

The variation of Hodge structures $\HH$ induces a period map $\Prd \colon \Sigma\longrightarrow \Gamma\bs \DD$. Since the group $\SL(V)$ acts equivariantly on $\HH$, the period map is constant on each $\SL(V)$-orbit in $\Sigma$. Therefore, the map $\Prd$ descends to $\F_T\longrightarrow \Gamma\bs \DD$, still denoted by $\Prd$.

Define $\calH_*$ to be the intersection of $\calH_{\infty}$ with $\DD$, which is a $\Gamma$-invariant hyperplane arrangement. We have the Looijenga compactification $\overline{\Gamma\bs\DD}^{\calH_*}$ of $\Gamma\bs(\DD-\calH_*)$.

\begin{proof}[Proof of theorem \ref{theorem: main}]

There is a natural finite morphism $\pi\colon \Gamma\bs \DD\longrightarrow \widehat{\Gamma}\bs \widehat{\DD}$ (see \cite[Proposition A.1]{yu2018moduli} for finiteness). We have a  commutative diagram
\begin{equation*}
\label{diagram: normalization}
\begin{tikzcd}
\F\arrow{r}{\Prd}\arrow[hook]{d}{j}  & \Gamma\bs\DD\arrow{d}{\pi} \\
\calM \arrow[hook]{r}{\Prd} & \widehat{\Gamma}\bs\widehat{\DD}.
\end{tikzcd}
\end{equation*}
Since the composed map from $\F$ to $\widehat{\Gamma}\bs\widehat{\DD}$ is injective, the map $\Prd\colon\F\to \Gamma\bs \DD$ is also injective. The image of $\F$ in $\widehat{\Gamma}\bs\widehat{\DD}$ does not meet $\widehat{\Gamma}\bs \calH_{\infty}$, thus the image of $\F$ in $\Gamma\bs\DD$ is contained in $\Gamma\bs (\DD-\calH_{*})$. 

It is a classical result (\cite{severi1959nodalcurve}, also see \cite[Chapter 4.7]{sernesi2006deformation}) that $\dim \Sigma=27-n$, hence $\dim \F=\dim \Sigma-\dim \PSL(V)=19-n$. On the other hand, $\dim \DD=\rank (\Lambda_T)-2=20-\rank(M)=19-n$. Thus $\F$ and $\Gamma\bs \DD$ are irreducible quasi-projective varieties with the same dimension. We conclude that $\Prd$ is a bimeromorphism.

We claim that $\pi$ is generically injective. The argument here follows \cite[Proposition A.1]{yu2018moduli}. Recall that $\Gamma$ is the centralizer of $\iota$ in $\widehat{\Gamma}$. For any point $x\in \DD$, denote by $\Pic(x)=H_x^{1,1}\cap \Lambda$ the Picard lattice. Take two points $x,y\in \DD$ with $\Pic(x)=M$. Suppose there exists $\sigma\in \widehat{\Gamma}$ with $\sigma(x)=y$. Then $\sigma^{-1}(\Pic(y))\subset \Pic(x)=M$, hence $\Pic(y)=M$ and $\sigma$ preserves $M$. Thus $\sigma\in \Gamma$. This implies the generic injectivity of $\pi$. By \cite[Theorem A.13]{yu2018moduli}, the morphism $\pi$ extends to a finite morphism $\overline{\Gamma\bs\DD}^{\calH_*}\longrightarrow \overline{\widehat{\Gamma}\bs\widehat{\DD}}^{\calH_{\infty}}$, still denoted by $\pi$. We conclude that $\pi\colon \overline{\Gamma\bs\DD}^{\calH_*}\longrightarrow\overline{\widehat{\Gamma}\bs\widehat{\DD}}^{\calH_{\infty}}$ is a normalization of its image.

We now have the following commutative diagram:
\begin{equation}
\label{diagram: normalization}
\begin{tikzcd}
\F\arrow{r}{\Prd}\arrow{d}{j}      & \Gamma\bs(\DD-\calH_{*})\arrow{d}{\pi} \\
\overline{\calM}\arrow{r}{\Prd} & \overline{\widehat{\Gamma}\bs\widehat{\DD}}^{\calH_{\infty}}.
\end{tikzcd}
\end{equation}
Since $\F$ and $\Gamma\bs\DD$ are bimeromorphic via $\Prd$, the Zariski closure of $j(\F)$ in $\overline{\calM}$ and that of $\pi(\Gamma\bs\DD)$ in $\overline{\widehat{\Gamma}\bs\widehat{\DD}}^{\calH_{\infty}}$ are identified via $\Prd$. By Proposition \ref{proposition: j}, $\overline{\F}$ is the normalization of the closure of $j(\F)$ in $\overline{\calM}$. Therefore, by taking normalizations of the two closures, we obtain a unique isomorphism between $\overline{\F}$ and $\overline{\Gamma\bs\DD}^{\calH_*}$, which extends the upper row morphism in diagram \eqref{diagram: normalization}, and is still denoted by $\Prd$. This argument essentially follows from a slightly more general result \cite[Lemma 5.2]{yu2018moduli}. The theorem follows.
\end{proof}

We provide another proof of the injectivity of $\Prd\colon \F \to \Gamma\bs \DD$ which does not rely on the injectivity of $\Prd\colon \calM\to \widehat{\Gamma}\bs \widehat{\DD}$. We call an element in a lattice with self-intersection $-2$ a root. Let $H_M^{\perp}$ be the orthogonal complement of $H$ in $M$. We need the following lemma (which is a property of lattices with independent interest):
\begin{lem}
\label{lemma: no extra roots}
The vectors $\pm E_i\in M$ are all the roots in $H_M^{\perp}$.
\end{lem}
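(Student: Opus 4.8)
The plan is to understand the lattice $H_M^\perp$ explicitly and then classify all roots it contains. Recall that $M$ is generated by $[H], [E_1], \dots, [E_n]$ (the image of $H^2(Q_F,\ZZ)(2)$) together with the classes $[C_1], \dots, [C_l]$, and that $H^2(Q_F,\ZZ)(2)$ carries the intersection form scaled by $2$: so $[H]^2 = 2$, $[E_i]^2 = -2$, and these are mutually orthogonal. The polarization class is $H = [H]$, whose orthogonal complement inside $H^2(Q_F,\ZZ)(2)$ is the negative definite lattice spanned by $[E_1],\dots,[E_n]$ with form $\diag(-2,\dots,-2)$. The vectors $\pm E_i$ are visibly roots lying in $H_M^\perp$, so the content of the lemma is that there are no \emph{others}, including ones that use the extra generators $[C_i]$.

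The key structural input is that $M$ has signature $(1, n)$, so $H_M^\perp$ is negative definite of rank $n$. First I would write a general element $v \in H_M^\perp \subset M$ in terms of the rational expression for the $[C_i]$ from equation \eqref{equation: expression}, reducing everything to the basis $[H], [E_1], \dots, [E_n]$ of $H^2(Q_F,\ZZ)(2)\otimes\QQ$. Orthogonality to $H=[H]$ forces the $[H]$-coefficient of $v$ to vanish, so $v$ lies in the span of the $[E_j]$; thus we may write $v = \sum_j b_j [E_j]$ with $b_j \in \frac{1}{2}\ZZ$ (half-integers appear because of the denominators $2$ in the $[C_i]$). The self-intersection is then $v^2 = -2\sum_j b_j^2$, so $v$ being a root means $\sum_j b_j^2 = 1$. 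Over the \emph{rationals} this already forces $v = \pm [E_{j_0}]$ for a single index $j_0$, since a sum of squares of half-integers equals $1$ only when exactly one term is $1$ and the rest vanish (the alternative $b_j = \pm\tfrac12$ on four indices gives $4 \cdot \tfrac14 = 1$, which must be excluded).

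The main obstacle, and the genuinely lattice-theoretic step, is therefore ruling out the ``four half-integer'' solutions $v = \tfrac12(\pm[E_{j_1}] \pm [E_{j_2}] \pm [E_{j_3}] \pm [E_{j_4}])$: these have the right self-intersection $-2$, so I must show they never actually lie in the integral lattice $M$. Here I would use the explicit description of $M/H^2(Q_F,\ZZ)(2) \cong (\ZZ/2)^{l-1}$ from Proposition \ref{proposition: m}: every element of $M$ is an integral combination of $[H],[E_i]$ plus a $\ZZ/2$-combination of $[C_1],\dots,[C_{l-1}]$, so its class modulo $H^2(Q_F,\ZZ)(2)$ is determined by the half-integer parts of its coefficients. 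The half-integer pattern of a prospective root $v$ must match the reduction mod $H^2(Q_F,\ZZ)(2)$ of some $\sum_{i\in I} [C_i]$; using equation \eqref{equation: expression} and the combinatorics of the incidence coefficients $a_{ij}$ (in particular the argument in Proposition \ref{proposition: m} that a nonzero $\ZZ/2$-combination of the $[C_i]$ always leaves a surviving $-\tfrac12$ coefficient on some $[E_{s(i)}]$, and moreover that the $[H]$-coefficient $\tfrac12\sum_{i\in I} d_i$ is then a nonzero integer multiple of the polarization direction), I would show that any element of $M$ with half-integer $[E_j]$-coefficients necessarily has nonzero $[H]$-coefficient, contradicting $v \in H_M^\perp$. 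Hence the only roots in $H_M^\perp$ have integer coordinates, and the rational analysis already pinned those down to $\pm E_i$, completing the proof.
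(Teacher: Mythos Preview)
Your reduction is sound through the point where you isolate the candidates $v=\tfrac12(\pm[E_{j_1}]\pm[E_{j_2}]\pm[E_{j_3}]\pm[E_{j_4}])$, but the argument you offer to exclude them is wrong. You assert that any element of $M$ with a half-integer $[E_j]$-coefficient must have nonzero $[H]$-coefficient. This is false. Take singular type $(2,4)$ with $C_1$ the conic: equation~\eqref{equation: expression} gives $[C_1]=[H]-\tfrac12\sum_{j=1}^{8}[E_j]$, so $[C_1]-[H]\in M$ lies in $H_M^\perp$ and has eight half-integer $[E_j]$-coefficients. The quantity $\tfrac12\sum_{i\in I}d_i$ you invoke can certainly be an integer (here it equals $1$), and then subtracting that multiple of $[H]$ kills the polarization coordinate while leaving all the half-integers intact.

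What actually excludes the four-half-integer candidates is a \emph{count}, not a parity obstruction on the $[H]$-coordinate, and this is exactly the paper's argument. The indices $j$ for which $\beta_j\in\tfrac12+\ZZ$ are precisely those where $E_j$ comes from an intersection point of $C'=\bigcup_{\epsilon_i\text{ odd}}C_i$ with $C''=\bigcup_{\epsilon_i\text{ even}}C_i$. If any half-integer occurs then both subcurves are nonempty; since $\deg C'+\deg C''=6$ with both positive, one has $\deg C'\cdot\deg C''\ge 5$, and as $Z(F)$ is nodal B\'ezout produces at least five such intersection points. Thus at least five of the $\beta_j$ satisfy $|\beta_j|\ge\tfrac12$, forcing $v^2=-2\sum_j\beta_j^2\le-\tfrac52<-2$. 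In particular your case ``exactly four half-integers'' never occurs for an element of $M$: whenever the class modulo $H^2(Q_F,\ZZ)(2)$ is nontrivial, at least five coefficients are half-integral. Plugging this single missing observation into your outline completes the proof.
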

\begin{proof}
Suppose not, let $v$ be a root in $H_M^{\perp}$ which is distinct from $\pm E_1, \pm E_2, \dots, \pm E_n$. From proposition \ref{proposition: m}, $M$ is integrally generated by $[H], [E_1], [E_2], \dots, [E_n], [C_1], [C_2], \dots, [C_l]$. We can write $v$ as an integral linear combination of $[H], [E_1], [E_2], \dots, [E_n], [C_1], [C_2], \dots, [C_l]$. Suppose the coefficient of $[C_i]$ is $\epsilon_i$. Let $C^{\prime}$ be the union of images of those $C_i$ in $\PP(V)$ with $\epsilon_i$ odd, and $C^{\prime\prime}$ be the union of images of other $C_i$ in $\PP(V)$.

There is a unique expression of $v$ as a half-integral combination of $[E_1], [E_2], \dots, [E_n]$. In this expression, the coefficient of $[E_j]$ is congruent to $\frac{1}{2}$ (mod $\ZZ$) if and only if $E_j$ is from blowing up of certain intersection point of $C^{\prime}$ and $C^{\prime\prime}$. By the assumption on $v$, there is at least one $j\in\{1,2,\dots,n\}$ with the coefficient of $[E_j]$ congruent to $\frac{1}{2}$, hence both $C^{\prime}$ and $C^{\prime\prime}$ are nonempty. Since $\deg(C^{\prime})+\deg(C^{\prime\prime})=6$, there are at least $5$ intersection points between $C^{\prime}$ with $C^{\prime\prime}$. Choose $5$ intersection points, and let the corresponding exceptional rational curves be $E_{j_1},E_{j_2},E_{j_3},E_{j_4},E_{j_5}$. Notice that the orthogonal complement of $[H]$ in $M$ is negative definite, we have
\begin{equation*}
-2= v^2\le (\frac{[E_{j_1}]+[E_{j_2}]+[E_{j_3}]+[E_{j_4}]+[E_{j_5}]}{2})^2=-\frac{5}{2},
\end{equation*}
contradiction! The lemma follows.
\end{proof}

Now we give the second proof of the injectiviey of $\Prd\colon \F \to \Gamma\bs \DD$. Suppose $F_1,F_2\in \Sigma$ satisfy $\Prd(F_1)=\Prd(F_2)$. Then there exists an isomorphism $\kappa\colon (H^2(\widetilde{S}_{F_1}), M_{F_1}, H, \iota)\cong(H^2(\widetilde{S}_{F_2}), M_{F_2}, H, \iota)$, sending $H^{2,0}(\widetilde{S}_{F_1})$ to $H^{2,0}(\widetilde{S}_{F_2})$. Notice that $\kappa$ maps roots in $H_{M_{F_1}}^{\perp}$ to roots in $H_{M_{F_2}}^{\perp}$. By lemma \ref{lemma: no extra roots}, the effective roots $[E_i]\in M_{F_1}$ are sent to $\pm[E_j]$ in $M_{F_2}$. Denote by $r_i$ the reflection of $H^2(\widetilde{S}_F,\ZZ)$ with respect to $[E_i]$. We can compose $\kappa$ with some reflections $r_j$ such that effective roots are sent to effective roots. Suppose $\kappa$ sends $[E_i]$ to $-[E_j]$, then we use $r_j\circ \kappa$ instead of $\kappa$. After adjustments, we may assume $\kappa$ sends exceptional divisor classes $[E_1], \cdots, [E_n]$ in $H_{M_{F_1}}^{\perp}$ to those in $H_{M_{F_2}}^{\perp}$. Furthermore, the class $16H-[E_1]-[E_2]-\cdots-[E_n]$ is ample on both $\widetilde{S}_{F_1}$ and $\widetilde{S}_{F_2}$. The map $\kappa$ preserves this ample class, hence (by global Torelli theorem) is induced by an isomorphism $f\colon \widetilde{S}_{F_2}\cong \widetilde{S}_{F_1}$, which commutes with $\iota$. So $f$ descends to an isomorphism $Q_{F_2}\cong Q_{F_1}$. This morphism preserves $\{E_1, \cdots, E_n\}$ on both sides. So it descends to a linear transformation $\PP(V)\to \PP(V)$ sending $Z(F_2)$ to $Z(F_1)$. The injectivity follows.

When $\calH_*$ is empty, the Looijenga compactification $\overline{\Gamma\bs\DD}^{\calH_*}$ is actually the Baily-Borel compactification $\overline{\Gamma\bs\DD}^{bb}$ of $\Gamma\bs\DD$. In the remaining of this section, we give a criterion to determine whether $\overline{\F}$ is identified with the Baily-Borel compactification $\overline{\Gamma\bs\DD}^{bb}$. 
\begin{prop}
\label{proposition: bb criterion}
Let $T$ be a singular type. The hyperplane arrangement $\calH_*$ is empty if and only if $\Sigma_T$ does not contain any cube of quadric polynomial.
\end{prop}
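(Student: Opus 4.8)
The plan is to read off the emptiness of $\calH_*$ through the isomorphisms of Theorem \ref{theorem: main} and the explicit boundary description of Remark \ref{remark: describtion of looi compactification}. Since $\overline{\Gamma\bs\DD}^{\calH_*}$ equals the Baily--Borel compactification exactly when $\calH_*=\emptyset$, and $\overline{\F}_T\cong\overline{\Gamma\bs\DD}^{\calH_*}$, the proposition amounts to matching the two sides of a dichotomy: nonempty $\calH_*$ versus the appearance of a cube of a quadric in the semistable locus $\widetilde{\Sigma}^{ss}$ (i.e. in the closure $\overline{\Sigma}$). I would begin by reducing the analytic condition to a lattice one. The arrangement $\calH_*=\calH_{\infty}\cap\DD$ is the union, over divisibility-two roots $v\in\Lambda_0$, of the loci $v^{\perp}\cap\DD$; such a locus is all of $\DD$ when $v\in M$ and a proper hyperplane when the projection of $v$ to $\Lambda_T$ is nonzero. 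By Lemma \ref{lemma: no extra roots} the only roots in $M\cap\Lambda_0=H_M^{\perp}$ are the $\pm E_i$, and these have divisibility one (as the classes resolving the $A_1$-nodes they belong to the discriminant arrangement $\calH_{\Delta}$, not to $\calH_{\infty}$). Thus no divisibility-two root lies in the primitive sublattice $M$, so every such root has nonzero $\Lambda_T$-component, and $\calH_*\neq\emptyset$ if and only if some divisibility-two root of $\Lambda_0$ meets $\DD$.

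Next I would identify this with the geometry using diagram \eqref{diagram: normalization} and Remark \ref{remark: describtion of looi compactification}. Under the Shah--Looijenga isomorphism $\overline{\calM}\cong\overline{\widehat{\Gamma}\bs\widehat{\DD}}^{\calH_{\infty}}$ the cube-of-quadric point $c\in\overline{\calM}$ is precisely the point to which the divisor $\overline{\widehat{\Gamma}\bs\calH_{\infty}}$ is contracted. By the commutativity of \eqref{diagram: normalization}, the isomorphism $\Prd\colon\overline{\F}_T\cong\overline{\Gamma\bs\DD}^{\calH_*}$, and the finite morphism $\pi$ of Proposition \ref{proposition: finite morphism looijenga compactification}, a semistable sextic in $\widetilde{\Sigma}^{ss}$ is a cube of a quadric if and only if $j$ sends it to $c$, if and only if the associated point of $\overline{\Gamma\bs\DD}^{\calH_*}$ lies in $\pi^{-1}(c)$. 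Because $\pi$ carries $\calH_*$ into $\calH_{\infty}$ and is built compatibly with the blow-up--contraction procedure defining the two Looijenga compactifications, the fibre $\pi^{-1}(c)$ is exactly the stratum obtained by contracting $\overline{\Gamma\bs\calH_*}$; this stratum is nonempty precisely when $\calH_*\neq\emptyset$. Chaining the equivalences yields the proposition.

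The delicate point, and the one I would spend the most care on, is the last identification of $\pi^{-1}(c)$ with the contracted $\calH_*$-locus: I must rule out that some Baily--Borel cusp of $\Gamma\bs\DD$ maps to $c$, since otherwise a cube of a quadric could in principle appear even with $\calH_*=\emptyset$. I would resolve this using the explicit construction of $\pi$ from \cite{yu2018moduli} (Theorem A.14), which is compatible with the modifications of the ambient compactifications and hence sends interior to interior, cusps to cusps, and the contracted $\calH_*$-stratum onto the contracted $\calH_{\infty}$-point; together with the generic injectivity of $\pi$ established in the proof of Theorem \ref{theorem: main}, this pins down the fibre. As an independent verification of the forward direction, one can run Shah's semistable reduction for a one-parameter family of type-$T$ sextics degenerating to a smooth triple conic and compute the limiting mixed Hodge structure: the single vanishing class it produces is a divisibility-two $(-2)$-class lying in $\Lambda_T$, exhibiting a hyperplane of $\calH_*$ directly, while conversely a hyperplane of $\calH_*$ furnishes such a degeneration.
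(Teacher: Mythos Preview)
Your core argument---the second paragraph---is exactly the paper's: the proposition follows from Remark~\ref{remark: describtion of looi compactification} and diagram~\eqref{diagram: normalization}, and indeed the paper's entire proof is a single sentence to that effect. Your first paragraph (checking via Lemma~\ref{lemma: no extra roots} that no divisibility-two root lies in $M$) and your third (worrying whether a Baily--Borel cusp of $\Gamma\bs\DD$ could land on $c$ under $\pi$) flesh out points the paper leaves tacit; both are correct and reasonable elaborations, though strictly more than the paper supplies. The only loose end is the closing ``independent verification'': a degeneration to a triple conic is not an $A_1$-type degeneration and does not produce a \emph{single} vanishing $(-2)$-class, so that heuristic should either be dropped or rephrased as the statement that the limit period lands in the contracted $\calH_{\infty}$-stratum---which simply restates the main argument rather than verifying it independently.
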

\begin{proof}
This is straightforward from remark \ref{remark: description of looi compactification} and diagram \eqref{diagram: normalization}.
\end{proof}

\section{Examples and Related Constructions}
\label{section: example}

\subsection{Relation to del Pezzo surfaces}
\label{section: del pezzo}
Let $T$ be a singular type of nodal sextic curves. Suppose a generic sextic curve $Z(F)$ of type $T$ is not irreducible, then it contains a line, a quadric curve or a cubic curve. Suppose $Z(F)$ contains a line, then there are at least $5$ nodes on that line. Suppose $Z(F)$ contains a quadric curve, then there are at least $8$ nodes on that quadric curve. Suppose $Z(F)$ contains a cubic curve, then there are at least $9$ nodes on that cubic curve. In any cases, the nodes are not of general position.

Now we assume that a generic sextic curve of type $T$ is irreducible. Let $n$ be the number of nodes. Since a sextic curve has arithmetic genus $10$, we have $n\le 10$. It is well-known that for generically $n\le 8$ points on $\PP^2$, there exists an irreducible plane sextic curve containing those points as nodes. This implies the following relation between nodal sextic curves and del Pezzo surfaces. For reader's convenience, we include a proof.

\begin{prop}
Suppose a generic curve $Z(F)$ of singular type $T$ is an irreducible nodal sextic curve with $n\le 8$ nodes, then the surface $Q_F$ (which is the blowup of $\PP(V)$ at the $n$ nodes of $Z(F)$) is a del Pezzo surface of degree $9-n$, and the double cover $\widetilde{S}_F\longrightarrow Q_F$ is branched along a smooth irreducible curve of genus $10-n$.
\end{prop}
\begin{proof}
Let $S_n$ be the permutation group of $n$ elements acting naturally on $(\PP^2)^n$.
Consider the map
\begin{equation*}
f\colon \Sigma_T\longrightarrow S_n\bs ((\PP^2)^n-\bigcup_{i\ne j}\Delta_{ij})
\end{equation*}
sending $C\in \Sigma_T$ to the set of its nodes. Here $\Delta_{ij}$ consists of points in $(\PP^2)^n$ with the $i$-th and $j$-th coordinates equal. We claim that the tangent map of $f$ is surjective at any $C\in \Sigma_T$. Let $p_1,\dots, p_n$ be the $n$ nodes of $C$. Let $\widetilde{C}$ be the normalization of $C$, and denote by $q_{2i-1}, q_{2i}$ the preimage of $p_i$. Let $D=p_1+\cdots +p_n$ be a divisor on $C$, and $\widetilde{D}=q_1+\cdots q_{2n}$ be a divisor on $\widetilde{C}$. By local calculation of variation of nodes, the tangent space of $\Sigma_T$ at $C$ is naturally identified with:
\begin{equation*}
H^0(C,\calO(6)(-D))\cong H^0(\widetilde{C},\pi^*\calO(6)(-\widetilde{D})).
\end{equation*}
The tangent map $df$ at $C$ is:
\begin{equation*}
df\big{|}_C\colon H^0(\widetilde{C},\pi^* \calO(6)(-\widetilde{D}))\longrightarrow\bigoplus_i \pi^*(\calO(6))(-\widetilde{D})\big{|}_{q_i}
\end{equation*}
sending $s$ to $(s(q_i))_i$. We claim that $df\big{|}_C$ is surjective. From the following exact sequence of coherent sheaves on $\widetilde{C}$:
\begin{equation*}
0\longrightarrow \pi^*\calO(6)(-2\widetilde{D})\longrightarrow \pi^*\calO(6)(-\widetilde{D})\longrightarrow \bigoplus_i \pi^*\calO(6)(-\widetilde{D})\big{|}_{q_i}\longrightarrow 0
\end{equation*}
it suffices to show $H^1(\widetilde{C},\pi^*\calO(6)(-2\widetilde{D}))=0$. The degree of $\pi^*\calO(6)(-2\widetilde{D})$ equals to $36-4n$. The genus of $\widetilde{C}$ is $g(\widetilde{C})=10-n$. Since $n\le 8$, we have $36-4n> 2g(\widetilde{C})-2$. Thus $H^1(\widetilde{C},\pi^*\calO(6)(-2\widetilde{D}))=0$ and the claim follows.

It is now clear that a generic point $C=Z(F)\in \Sigma_T$ has $n$ nodes of general position. Therefore, the surface $Q_F$ is a del Pezzo surface of degree $9-n$. Moreover, the double cover $\widetilde{S}_F\longrightarrow Q_F$ is branched along $\widetilde{C}$, which is a smooth irreducible curve of genus $10-n$.
\end{proof}

\begin{rmk}
A generic choice of $9$ points on $\PP^2$ can not be realized nodes of any sextic curve. See \cite{cayley1869/71sextic}.
\end{rmk}

\subsection{Union of smooth plane curves}
\label{subsection: union}
In this section, we apply theorem \ref{theorem: main} to describe the moduli spaces of tuples of smooth plane curves with total degree $6$. Special cases were studied in \cite{laza2009deformation}, \cite{matsumoto1992sixlines}, \cite{gallardo2017compactifications}. Let $T$ be the singular type such that a generic sextic curve in $\Sigma_T$ is the union of several smooth curves. Suppose the degrees of those smooth components are $m_1\le m_2\le \cdots \le m_l$, then we also denote $T=(m_1,\dots, m_l)$. We have $11$ possibilities of such $T$. They are $(6)$, $(1,5)$, $(2,4)$, $(3,3)$, $(1,1,4)$, $(1,2,3)$, $(2,2,2)$, $(1,1,1,3)$, $(1,1,2,2)$, $(1,1,1,1,2)$ and $(1,1,1,1,1,1)$.

Now we give an explicit description of the GIT quotient $\F_T$ for the types above. Let $n_1\cdot 1+ n_2\cdot 2+ \cdots  +n_k\cdot k=6$ be a partition of $6$. Then we have a natural finite map
\begin{equation*}
f\colon \prod_i \PP(\Sym^i V^*)^{n_i}\longrightarrow \PP(\Sym^6 V^*).
\end{equation*}
Taking quotient by the action of permutation group $\prod_i S_{n_i}$ on the left, we have a generically injective finite map
\begin{equation*}
\prod_i S_{n_i}\bs \prod_i \PP(\Sym^i V^*)^{n_i}\longrightarrow \PP(\Sym^6 V^*).
\end{equation*}
which is the normalization of $\overline{\Sigma}$. So we have
\begin{equation*}
\widetilde{\Sigma}\cong \prod_i S_{n_i}\bs \prod_i \PP(\Sym^i V^*)^{n_i}.
\end{equation*}

Next we describe the polarization on $\widetilde{\Sigma}$. The pullback of $\calO(1)$ under $f$ is
\begin{equation*}
f^*(\calO(1))\cong \calO(1)^{\boxtimes(n_1+\cdots+n_k)}.
\end{equation*}
So the GIT construction has the following form
\begin{equation}
\label{equation: GITspaces}
(\SL(V)\times \prod_i S_{n_i})\dbs (\prod_i \PP(\Sym^i V^*)^{n_i}, \calO(1)^{\boxtimes(n_1+\cdots+n_k)}).
\end{equation}

In other words, the GIT quotients of the form \eqref{equation: GITspaces} are isomorphic to Looijenga compactifications of arithmetic quotients of type $\IV$ domains. From proposition \ref{proposition: bb criterion}, we have:
\begin{prop}
\label{proposition: bb smooth components}
For the following $T$: $(6)$, $(1,5)$, $(3,3)$, $(1,1,4)$, $(1,2,3)$, $(1,1,1,3)$, $(1,1,2,2)$, $(1,1,1,1,2)$, $(1,1,1,1,1,1)$, we obatin identifications of $\overline{\F}_T$ with the Baily-Borel compactifications $\overline{\Gamma\bs\DD}^{bb}$.
\end{prop}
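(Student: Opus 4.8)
The plan is to read the statement off Proposition~\ref{proposition: bb criterion} together with the Main Theorem~\ref{theorem: main}. The Main Theorem already supplies $\Prd\colon\overline{\F}_T\cong\overline{\Gamma\bs\DD}^{\calH_*}$, and by construction the Looijenga compactification $\overline{\Gamma\bs\DD}^{\calH_*}$ collapses to the Baily--Borel compactification $\overline{\Gamma\bs\DD}^{bb}$ exactly when the arrangement $\calH_*$ is empty. So the entire task is to certify $\calH_*=\emptyset$ for each type on the list, and Proposition~\ref{proposition: bb criterion} converts this into the purely geometric assertion that the closure $\overline{\Sigma}_T$ contains no cube of a quadric.

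The engine of the argument is a parity observation on the component degrees. Using the description $\widetilde{\Sigma}\cong\prod_i S_{n_i}\bs\prod_i\PP(\Sym^iV^*)^{n_i}$, a generic member of $\Sigma_T$ is a product $F_1\cdots F_l$ with $F_i\in\Sym^{m_i}(V^*)$ cutting out the smooth degree-$m_i$ component. First I would suppose a triple conic $Q^3$, with $Q$ a smooth conic, lay in $\overline{\Sigma}_T$; choosing a one-parameter family inside $\widetilde{\Sigma}$ specializing to it, each $F_i$ degenerates to a form $G_i\in\Sym^{m_i}(V^*)$ with $G_1\cdots G_l=Q^3$ up to scalar. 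Since $\Sym^{\bullet}(V^*)$ is a unique factorization domain and $Q$ is irreducible, every $G_i$ must be a scalar times a power $Q^{k_i}$, and comparing degrees forces $m_i=2k_i$. Hence $\overline{\Sigma}_T$ contains a triple conic precisely when every component degree $m_i$ is even.

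With the parity criterion in hand, the proof is a finite check. The only partitions of $6$ into even parts are $(6)$, $(2,4)$ and $(2,2,2)$; every other type carries a component of odd degree (a line or a cubic, or an odd number of lines that cannot be absorbed into powers of a single conic). Consequently, for each listed type possessing an odd-degree component, namely $(1,5)$, $(3,3)$, $(1,1,4)$, $(1,2,3)$, $(1,1,1,3)$, $(1,1,2,2)$, $(1,1,1,1,2)$ and $(1,1,1,1,1,1)$, the closure $\overline{\Sigma}_T$ contains no triple conic, so $\calH_*=\emptyset$ and $\overline{\F}_T\cong\overline{\Gamma\bs\DD}^{bb}$ follows from the two inputs above. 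By contrast the all-even partitions $(2,4)$ and $(2,2,2)$ do admit a degeneration $F_1\cdots F_l\to Q^3$ (take $Q\cdot Q^2$ and $Q\cdot Q\cdot Q$ respectively), so there $\calH_*\neq\emptyset$ and the Looijenga modification is genuinely nontrivial.

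The delicate step, and the one I expect to demand the most care, is the specialization argument underlying the parity criterion. For the direction actually used here one needs to know that the triple conic sits inside the semistable locus $\widetilde{\Sigma}^{ss}$, so that ``containing a cube of a quadric'' in the sense of Proposition~\ref{proposition: bb criterion} is indeed governed by membership in the closure $\overline{\Sigma}_T$; this is where Shah's stability analysis enters, the triple conic being the strictly semistable sextic that is contracted in Remark~\ref{remark: describtion of looi compactification}. For the converse one must check that when all $m_i$ are even the forms $Q^{k_i}$ genuinely lie in the closure of the smooth locus $\PP(\Sym^{m_i}V^*)^{sm}$, so that the limit is reached from within $\Sigma_T$; this is routine, since double and triple conics are limits of smooth curves of the corresponding even degree. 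The borderline single-component type $(6)$ is exactly the smooth-sextic situation reviewed in Section~\ref{section: review period of degree two K3}, and it is there that the role of the triple conic in the arrangement first appears, so this case should be matched against that discussion rather than through the reducibility argument above.
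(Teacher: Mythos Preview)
Your approach is exactly the paper's: the proposition is stated immediately after ``From proposition~\ref{proposition: bb criterion}, we have:'' and is meant to be read off that criterion together with Theorem~\ref{theorem: main}. The paper gives no further argument; the case-by-case verification is left implicit and recorded in Table~\ref{table: smooth components}. Your parity observation---that $\overline{\Sigma}_T$ contains a cube of a smooth conic iff every $m_i$ is even, by unique factorization in $\Sym^\bullet(V^*)$---is the clean way to make that verification explicit, and it is correct.

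The one place where your write-up wobbles is the type $(6)$. Your own parity criterion puts $(6)$ squarely among the all-even partitions: the closure $\overline{\Sigma}_{(6)}=\PP\Sym^6(V^*)$ plainly contains $Q^3$, so $\calH_*=\calH_\infty\neq\emptyset$ and $\overline{\F}_{(6)}\cong\overline{\widehat{\Gamma}\bs\widehat{\DD}}^{\calH_\infty}$ is the genuine Looijenga compactification of Section~\ref{section: review period of degree two K3}, \emph{not} Baily--Borel. This is precisely what Table~\ref{table: smooth components} records (the entry for $(6)$ is ``no''). The inclusion of $(6)$ in the proposition's list is therefore a slip in the paper, and your final paragraph should say so directly rather than deferring to Section~\ref{section: review period of degree two K3} as if it might rescue the claim. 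With that correction your argument establishes the proposition for the eight types $(1,5)$, $(3,3)$, $(1,1,4)$, $(1,2,3)$, $(1,1,1,3)$, $(1,1,2,2)$, $(1,1,1,1,2)$, $(1,1,1,1,1,1)$, and simultaneously explains why $(6)$, $(2,4)$, $(2,2,2)$ are excluded, matching the table.
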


We put together the information in table \eqref{table: smooth components}.
\begin{table}[ht] \caption{Information of singular types $T$ when a generic member in $\Sigma_T$ is the union of some smooth curves}
\vspace{0.5ex}
\label{table: smooth components}
\centering
\begin{tabular}{cccccc}
\hline\hline
Type & Number of Nodes & $Dim(\F_T)$ &   $Rank(M)$ & $A_M$ & Whether Baily-Borel \\ [0.5ex]
\hline
$(6)$           & $0$               & $19$        & 1   &  $(\ZZ/2\ZZ)^1$                       & no        \\
$(1,5)$         & $5$               & $14$        & 6   &  $(\ZZ/2\ZZ)^4$                       & yes        \\
$(2,4)$         & $8$               & $11$        & 9   &  $(\ZZ/2\ZZ)^7$                       & no         \\
$(3,3)$         & $9$               & $10$        & 10  &  $(\ZZ/2\ZZ)^8$                       & yes        \\
$(1,1,4)$       & $9$               & $10$        & 10  &  $(\ZZ/2\ZZ)^6$                       & yes        \\
$(1,2,3)$       & $11$              & $8$         & 12  &  $(\ZZ/2\ZZ)^8$                       & yes        \\
$(2,2,2)$       & $12$              & $7$         & 13  &  $(\ZZ/2\ZZ)^9$                       & no         \\
$(1,1,1,3)$     & $12$              & $7$         & 13  &  $(\ZZ/2\ZZ)^7$                       & yes        \\
$(1,1,2,2)$     & $13$              & $6$         & 14  &  $(\ZZ/2\ZZ)^8$                       & yes        \\
$(1,1,1,1,2)$   & $14$              & $5$         & 15  &  $(\ZZ/2\ZZ)^7$                       & yes        \\
$(1,1,1,1,1,1)$ & $15$              & $4$         & 16  &  $(\ZZ/2\ZZ)^6$                       & yes        \\
\hline
\end{tabular}
\end{table}
\section{Moduli of Nodal and Symmetric Sextic Curves}
\label{section: with symmetry}
We follow \cite{yu2018moduli} to define the so-called symmetry type. Let $A$ be a finite subgroup of $\SL(V)$ which contains the center $\mu_3\subset \SL(V)$. For any $\zeta\in\mu_3$ and $F\in \Sym^6(V^*)$, we have $\zeta(F)=F\circ \zeta^{-1}=\zeta^{-6}F=F$. Let $\lambda$ be a character of $A$ sending $\zeta$ to $1$. Let $\calV_{\lambda}$ be the eigenspace of $\Sym^6(V^*)$ associated with $\lambda$. Two pairs $(A_1,\lambda_1)$ and $(A_2,\lambda_2)$ are called equivalent if and only if there exists $g\in \SL(V)$ such that $g A_1 g^{-1}=A_2$ and $\lambda_2(gag^{-1})=\lambda_1(a)$ for all $a\in A_1$. In this case we write $(A_2,\lambda_2)=g(A_1,\lambda_1)$.
\begin{de}
A symmetry type for sextic curves is an equivalence class of pairs $(A,\lambda)$, denoted by $[(A,\lambda)]$.
\end{de}

Next we consider moduli of sextic curves with singular type $T_{sin}$ and symmetry type $T_{sym}$. Recall that $\Sigma_{T_{sin}}$ is the irreducible space of plane sextic curves of singular type $T_{sin}$. Let $n$ be the number of nodes. Choose a representative $(A,\lambda)$ for the symmetry type $T_{sym}$. Define $\Sigma_{T_{sin},T_{sym}}\coloneqq \Sigma_{T_{sin}}\cap \PP\calV_{\lambda}$. This may be reducible, see example \ref{example: six involution}. We denote by $\Sigma$ one of the irreducible components of $\Sigma_{T_{sin},T_{sym}}$. Let $\overline{\Sigma}$ be the closure of $\Sigma$ in $\PP \Sym^6(V^*)$, and $\widetilde{\Sigma}$ the normalization of $\overline{\Sigma}$. Define
\begin{equation*}
N\coloneqq\{g\in\SL(V)\big{|}gAg^{-1}=A, \lambda(a)=\lambda(gag^{-1}),\forall a\in A, g\in \SL(V)\}
\end{equation*}
which is a reductive subgroup of $\SL(V)$. For any $g\in N$, $x\in \Sigma$ and $a\in A$, we have
\begin{equation*}
a(gx)=g(g^{-1}ag)(x)=g\lambda(g^{-1}ag)x=\lambda(a)gx,
\end{equation*}
which implies that $gx\in \Sigma$. Thus $N$ naturally acts on $\Sigma$, hence also on $\overline{\Sigma}$ and $\widetilde{\Sigma}$. The points in $\Sigma$ are stable under the action of $N$. Define $\F\coloneqq N\dbs\Sigma$ and $\overline{\F}\coloneqq N\dbs\widetilde{\Sigma}^{ss}$. From \cite{luna1975Adh} and the argument in the proof of proposition 2.7 in \cite{yu2018moduli}, we have:
\begin{prop}
\label{proposition: with symmetry finite morphism}
There is a natural morphism $j\colon \F\longrightarrow \F_{T_{sin}}$ which extends to a finite morphism $j\colon \overline{\F}\longrightarrow \overline{\F}_{T_{sin}}$.
\end{prop}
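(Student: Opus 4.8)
The plan is to construct $j$ on the stable loci by descending an equivariant inclusion, and then to obtain the finite extension over the common base $\overline{\calM}=\SL(V)\dbs \PP\Sym^6(V^*)^{ss}$ using Luna's theorem together with the normality of $\overline{\F}$. First I would produce the open morphism. Since $\Sigma$ is an irreducible component of $\Sigma_{T_d}\cap \PP\calV_{\lambda}$, the inclusion $\Sigma\hookrightarrow \Sigma_{T_d}$ is a closed immersion which is equivariant for $N\subset \SL(V)$. As the points of $\Sigma$ are $N$-stable and those of $\Sigma_{T_d}$ are $\SL(V)$-stable, this descends to a morphism $j\colon \F=N\dbs\Sigma\to \SL(V)\dbs\Sigma_{T_d}=\F_{T_d}$, well defined because two points of $\Sigma$ in a common $\SL(V)$-orbit have the same image.

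The substance is to realize both compactifications as finite over $\overline{\calM}$. The key input is Luna's theorem (\cite{luna1975Adh}), applied exactly as in the proof of proposition 2.6 of \cite{yu2018moduli}: since $A$ is reductive (being finite) and $N$ is the subgroup of $\SL(V)$ stabilizing the eigenspace data $(A,\lambda)$, the $N$-semistability on $\PP\calV_{\lambda}$ with respect to the restricted $\calO(1)$ agrees with the ambient $\SL(V)$-semistability, and the induced morphism $N\dbs(\PP\calV_{\lambda})^{ss}\to \overline{\calM}$ is finite. Because $\PP\calV_{\lambda}$ is a linear subspace, the closure $\overline{\Sigma}$ lies in $\PP\calV_{\lambda}$, so the normalization $\widetilde{\Sigma}\to \PP\calV_{\lambda}$ is finite and identifies $\widetilde{\Sigma}^{ss}$ with the preimage of the semistable locus; descending this and composing with Luna's finite map shows that the natural morphism $q\colon \overline{\F}=N\dbs\widetilde{\Sigma}^{ss}\to \overline{\calM}$ is finite. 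On the other side, the normalization statement of subsection \ref{subsection: GIT of singular sextic curves} gives that $p\colon \overline{\F}_{T_d}\to \overline{\calM}$ is the normalization of its (closed) image, hence finite.

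Finally I would extend $j$ and prove finiteness by a fiber-product argument over $\overline{\calM}$. On $\F$ the two lifts agree, $p\circ j=q|_{\F}$, since both record the point of $\overline{\calM}$ determined by the underlying sextic. Forming $\overline{\F}\times_{\overline{\calM}}\overline{\F}_{T_d}$ and letting $W$ be the closure of the graph of $j|_{\F}$, the projection $W\to \overline{\F}$ is finite (base change of $p$) and birational (an isomorphism over $\F$). As $\overline{\F}$ is normal, being a GIT quotient of the normal variety $\widetilde{\Sigma}^{ss}$, Zariski's main theorem forces $W\to \overline{\F}$ to be an isomorphism; composing its inverse with the second projection yields the desired extension $j\colon \overline{\F}\to \overline{\F}_{T_d}$ with $p\circ j=q$. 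Since $q$ is finite and $p$ is separated, $j$ is proper and quasi-finite, hence finite.

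The main obstacle is the first half of the middle step: verifying that $N$-semistability on the eigenspace $\PP\calV_{\lambda}$ coincides with ambient $\SL(V)$-semistability and that the resulting quotient map is finite. This is exactly where Luna's slice and conjugation argument enters, showing that any one-parameter subgroup of $\SL(V)$ destabilizing a point of $\PP\calV_{\lambda}$ can be replaced by one lying in $N$; it is the step I would model verbatim on \cite{yu2018moduli}. Once it is in place, the descent, the extension across the boundary via normality, and the finiteness of $j$ are all formal.
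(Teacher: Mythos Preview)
Your proposal is correct and takes essentially the same approach as the paper. The paper's own proof is a single sentence citing \cite{luna1975Adh} and the argument in the proof of proposition~2.6 of \cite{yu2018moduli}; your writeup is a faithful and more detailed expansion of exactly that route, with the Luna step, the finiteness over $\overline{\calM}$, and the normality/Zariski's main theorem extension all spelled out.
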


Next we define the period domain and period map associated with the type $(T_{sin},T_{sym})$. Given a symmetry type $[(A, \lambda)]$, we denote $\overline{A}=A/\mu_3\subset \PSL(V)$. From previous discussion, the character $\lambda$ descends to $\overline{A}$ and is still denote by $\lambda$. We define $A^\lambda$ to be the kernel of the morphism $\CC^*\times \overline{A} \to \CC^*,\, (t, a)\mapsto t^{-2}\cdot \lambda(a)$. Let $F\in \calV_{\lambda}$, then $A^\lambda$ acts naturally on $S_F=\{y^2=F(x_0, x_1, x_2)\}$ by $(t, a)\cdot [y: x_0: x_1: x_2]=[ty: a [x_0: x_1: x_2]]$ \footnote{This is well-defined in the weighted projective space $\PP^{3,1,1,1}_{y, x_0, x_1, x_2}$.}. Suppose $F\in \Sigma_{T_{sin}, T_{sym}}$, then $A^{\lambda}$ also acts naturally on $\widetilde{S}_{F}$. Thus we have an action of $A^{\lambda}$ on $\Lambda$. Let $(\Lambda,M,H,\iota)$ be the data attached to $T_{sin}$, see \S\ref{section: period map}. Clearly, the involution $\iota$ on $\widetilde{S}_F$ is contained in $A^{\lambda}$.

The induced action of $A^{\lambda}$ on $\Lambda$ preserves $M$ and its orthogonal complement $M^{\perp}$ in $\Lambda$. Let $\eta$ be the character of $A^{\lambda}$ associated with $H^{2,0}(\widetilde{S}_F)$. Let $\Lambda_{\eta}$ be the eigenspace of $M^{\perp}_{\CC}$ associated with $\eta$.

When $\eta(A^{\lambda})\subset\{\pm 1\}$ (we then say $\eta$ is real), we define $\DD$ to be one component of $\PP\{x\in \Lambda_{\eta}\big{|}\varphi(x,x)=0, \varphi(x,\overline{x})>0\}$, which is the type $\IV$ domain associated with $\Lambda_{\eta}$. When $\eta$ is not real, we define $\DD\coloneqq\PP\{x\in\Lambda_{\eta}\big{|}\varphi(x,\overline{x})>0\}$, which is a complex hyperbolic ball.

Recall that $\Gamma_{T_{sin}}$ is the centralizer of $\iota$ in $\widehat{\Gamma}$. We write $r_i$ for the reflection with respect to $[E_i]\in \Lambda$. We have $r_i\in \Gamma_{T_{sin}}$ and $A^{\lambda}\subset \Gamma_{T_{sin}}$. Define $\Gamma_{T_{sin},T_{sym}}$ to be the normalizer of $\langle A^{\lambda}, r_1, \cdots, r_n\rangle$ in $\Gamma_{T_{sin}}$. We write $\Gamma=\Gamma_{T_{sin}, T_{sym}}$ for short. Then $\Gamma$ is an arithmetic group acting on $\DD$. Take $F\in \Sym^6(V^*)$ with $[F]\in \Sigma$. Choose an isomorphism
\begin{equation*}
\Phi\colon(H^2(\widetilde{S}_F),M_F,H, \iota, A^{\lambda})\longrightarrow (\Lambda,M,H,\iota,A^{\lambda}),
\end{equation*}
then $\Phi(H^{2,0}(\widetilde{S}_F))\in \DD$. Two choices of $\Phi$ differ by an element in the centralizer of $A^{\lambda}$ in $\Gamma_{T_{sin}}$, which is contained in $\Gamma$. Thus we obatin an analytic morphism $\Prd\colon \Sigma\longrightarrow \Gamma\bs\DD$. Taking quotient by $N$ on the left side, the map $\Prd$ descends to $\Prd\colon \F\longrightarrow \Gamma\bs \DD$. This morphism is called the period map for sextic curves of type $(T_{sin}, T_{sym})$. Let $\calH_*\coloneqq \DD\cap \calH_{\infty}$ be a hyperplane arrangement in $\DD$. It is clear that $\Prd(\F)\subset \Gamma\bs(\DD-\calH_*)$.
\begin{thm}
\label{theorem: last}
The period map $\Prd\colon \F\longrightarrow \Gamma\bs(\DD-\calH_*)$ is an algebraic open embedding, and extends to an isomorphism $\Prd\colon \overline{\F}\cong \overline{\Gamma\bs\DD}^{\calH_*}$.
\end{thm}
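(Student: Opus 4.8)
The plan is to follow the proof of Theorem \ref{theorem: main} almost verbatim, replacing $\SL(V)$ by the reductive group $N$, the arithmetic group $\Gamma_{T_d}$ by $\Gamma=\Gamma_{T_d,T_s}$, and the global Torelli theorem by its $\overline{A}$-equivariant refinement. The first step is injectivity of $\Prd\colon\F\to\Gamma\bs\DD$. Suppose $F_1,F_2\in\Sigma$ have the same period. The chosen markings $\Phi_i$ produce an isomorphism $\kappa$ of the enriched data $(H^2(\widetilde{S}_{F_1}),M_{F_1},H,\iota,\overline{A})$ with that of $F_2$ carrying $H^{2,0}(\widetilde{S}_{F_1})$ to $H^{2,0}(\widetilde{S}_{F_2})$; because the period equality is witnessed by an element of $\Gamma$, which by definition normalizes $\langle\overline{A},r_1,\dots,r_n\rangle$, one may arrange $\kappa$ to be compatible with the $\overline{A}$-actions.

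Next I would perform the same reflection adjustments as in the main theorem. By Lemma \ref{lemma: no extra roots} the only roots in $H_M^\perp$ are $\pm[E_i]$, and the reflections $r_i$ lie in $\Gamma$ by construction, so after composing with suitable $r_i$ we may assume $\kappa$ matches the exceptional classes $[E_i]$ on the two sides. Since $16H-\sum_i[E_i]$ is ample on both $\widetilde{S}_{F_1}$ and $\widetilde{S}_{F_2}$, the $\overline{A}$-equivariant global Torelli theorem yields an isomorphism $f\colon\widetilde{S}_{F_1}\to\widetilde{S}_{F_2}$ commuting with both $\iota$ and the $\overline{A}$-action. Exactly as in the main theorem, $f$ descends to $Q_{F_1}\to Q_{F_2}$, preserves the exceptional configuration, and hence descends to a projective transformation $g$ sending $Z(F_1)$ to $Z(F_2)$; the $\overline{A}$-equivariance of $f$ forces $g\in N$, which gives the injectivity of $\Prd$ on $\F$.

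To upgrade injectivity to an open embedding I would observe that $\Prd$ is the restriction of the non-symmetric period map to compatible $\overline{A}$-fixed loci: the domain $\DD$ is cut out of $\DD_{T_d}$ by the $\eta$-eigenspace condition, and $j(\F)$ lands in the locus of curves carrying the symmetry, whose periods are exactly $\pi'(\Gamma\bs\DD)$ for the natural map $\pi'\colon\Gamma\bs\DD\to\Gamma_{T_d}\bs\DD_{T_d}$. Since $\Prd_{T_d}$ is an open embedding by Theorem \ref{theorem: main}, restricting to these loci shows $\Prd$ is a local isomorphism, so in particular $\dim\F=\dim\Gamma\bs\DD$ and, together with injectivity, $\Prd$ is a bimeromorphism onto its image. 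I would then check that $\pi'$ is generically injective by the Picard-lattice argument of the main theorem enriched with the symmetry: a very general $x\in\DD$ has $\Pic(x)=M$ with eigen-data pinning down $(\overline{A},\eta)$, so any $\sigma\in\Gamma_{T_d}$ carrying such an $x$ into $\DD$ must preserve $M$, the $\overline{A}$-action and $\eta$, hence lie in $\Gamma$; thus $\pi'$ is the normalization of its image.

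Finally, combining the finite morphism $j\colon\overline{\F}\to\overline{\F}_{T_d}$ of Proposition \ref{proposition: with symmetry finite morphism}, the isomorphism of Theorem \ref{theorem: main} applied to the singular type $T_d$, and the finite morphism of Looijenga compactifications furnished by \cite{yu2018moduli} (appendix, theorem A.14), I would assemble a commutative square exactly as in diagram \eqref{diagram: normalization}, with $\overline{\F}_{T_d}\cong\overline{\Gamma_{T_d}\bs\DD_{T_d}}$ along the bottom. Taking normalizations of the closures of the images of $\F$ and of $\Gamma\bs\DD$ then produces the isomorphism $\overline{\F}\cong\overline{\Gamma\bs\DD}^{\calH_*}$ extending $\Prd$, and restriction to the open parts gives the asserted open embedding onto $\Gamma\bs(\DD-\calH_*)$. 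The main obstacle I anticipate is the equivariant step: one must ensure that the marking isomorphism $\kappa$ can genuinely be taken compatible with $\overline{A}$, that the reflection adjustments stay inside $\Gamma$, and that equivariant global Torelli produces $f$ commuting with the full $\overline{A}$-action and not merely with $\iota$. The bookkeeping of the dimension match in the complex hyperbolic ball case versus the type $\IV$ case, and the possible reducibility of $\Sigma_{T_d,T_s}$, are the secondary points requiring care.
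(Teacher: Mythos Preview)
Your outline tracks the paper closely for injectivity and for the passage to compactifications, but the middle step---upgrading injectivity to an open embedding---has a gap. You assert that ``$j(\F)$ lands in the locus of curves carrying the symmetry, whose periods are exactly $\pi'(\Gamma\bs\DD)$'' and then restrict the open embedding $\Prd_{T_d}$. The forward inclusion is immediate, but the claim that every point of $\pi'(\Gamma\bs\DD)$ is the period of a symmetric curve is precisely the local surjectivity of $\Prd$ you are trying to establish; it does not follow formally from Theorem~\ref{theorem: main}, and the fixed-locus restriction does not make sense directly at the level of the quotients $\F_{T_d}$ and $\Gamma_{T_d}\bs\DD_{T_d}$, on which $\overline{A}$ does not act. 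The paper supplies this step via Burns--Rapoport (\cite{burns1975torelli}, Theorem~1): over a contractible neighbourhood $U\subset\DD$ of a period point, Theorem~\ref{theorem: main} furnishes a family of type-$T_d$ sextics, and since each Hodge structure over $U$ is $\overline{A}$-invariant, that theorem lifts $\overline{A}$ to automorphisms of the family of $K3$ surfaces commuting with $\iota$, hence to symmetries of the curves. This is what produces the open image and the bimeromorphism; your restriction argument, as written, assumes it.

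For injectivity the paper also takes a more direct route than your appeal to an ``$\overline{A}$-equivariant global Torelli theorem'': it applies ordinary polarized Torelli to obtain $f$ (which commutes with $\iota$ because $\Phi_2^{-1}\Phi_1$ preserves $M$), descends to $g\in\SL(V)$, notes $gAg^{-1}=A$, and then verifies $g\in N$ by the character computation $\lambda(a)F_2=aF_2=agF_1=gag^{-1}F_2=\lambda(gag^{-1})F_2$. This dissolves the equivariance obstacle you flag at the end. Your compactification square routes through $\overline{\F}_{T_d}$ whereas the paper goes all the way down to $\overline{\calM}$ and invokes Lemma~5.4 of \cite{yu2018moduli} for the normalization step; either route works, and in the paper's version your separate generic-injectivity check for $\pi'$ is not needed.
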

\begin{proof}
Take $F_1,F_2\in \Sym^6(V^*)$ such that $[F_1],[F_2]\in \Sigma$. Suppose $\Prd([F_1])=\Prd([F_2])$. Then there exist markings $\Phi_1,\Phi_2$ of $\widetilde{S}_{F_1},\widetilde{S}_{F_2}$ respectively, such that $\Phi_1(H^{2,0}(\widetilde{S}_{F_1}))=\Phi_2(H^{2,0}(\widetilde{S}_{F_2}))$. We have that $r_i\in \Gamma$ for any $1\le i\le n$. Hence we can assume without loss of generality that $\Phi_2^{-1}\Phi_1$ sends effective roots in $M_{F_1}$ to those in $M_{F_2}$. Then $\Phi_2^{-1}\Phi_1$ sends an ample class (say, $16H-[E_1]-\cdots-[E_n]$) to an ample class. By global Torelli for polarized $K3$ surfaces, there exists an isomorphism between $\widetilde{S}_{F_1}$ and $\widetilde{S}_{F_2}$ inducing $\Phi_2^{-1}\Phi_1\colon H^2(\widetilde{S}_{F_1},\ZZ)\longrightarrow H^2(\widetilde{S}_{F_2},\ZZ)$. This isomorphism is compatible with $\iota$, hence is induced by a linear transform $g\in \SL(V)$. We have $gAg^{-1}=A$. Now take any $a\in A$, we have
\begin{equation*}
\lambda(a)F_2=aF_2=agF_1=gaF_1=gag^{-1}F_2=\lambda(gag^{-1})F_2,
\end{equation*}
hence $\lambda(gag^{-1})=\lambda(a)$. This implies that $g\in N$. The injectivity of $\Prd$ follows.

Take $[F]\in \Sigma$ and $\Phi$ a marking of $\widetilde{S}_F$. Denote $x=\Phi(H^{2,0}(\widetilde{S}_F))\in \DD\subset\DD_{T_{sin}}$. By theorem \ref{theorem: main}, an open neighbourhood of $x$ in $\DD$ is induced by sextic curves of singular type $T_{sin}$. In particular, there exists a contractible open neighbourhood $U$ of $x$ in $\DD$, parametrizing a family of type $T_{sin}$ sextic curves $\mathscr{C}\longrightarrow U$. For every $x\in U$, let $Z(F_x)$ be the sextic curve over $x$. We have $\Phi(H^{2,0}(\widetilde{S}_{F_x}))=x$. Points in $U$ are polarized Hodge structures invariant under the action of $A^{\lambda}$. By theorem $1$ in \cite{burns1975torelli}, the $K3$ surface $\widetilde{S}_{F_x}$ admits an action of $A^{\lambda}$. Thus the family over $U$ of $K3$ surfaces $\widetilde{S}_{F_x}$ admits an action of $A^{\lambda}$. This action is induced by an action of $\overline{A}$ on $\mathscr{C}\longrightarrow U$. Therefore, The image of $\Prd\colon \F\longrightarrow \Gamma\bs\DD$ contains image of $U$ in $\Gamma\bs\DD$. Thus $\Prd$ has open image in $\Gamma\bs\DD$. We conclude that $\Prd$ is a bimeromorphism.

By \cite[Theorem A.13]{yu2018moduli}, there is a natural morphism $\Gamma\bs\DD\longrightarrow\widehat{\Gamma}\bs\widehat{\DD}$, which extends to $\overline{\Gamma\bs\DD}^{\calH_*}\longrightarrow \overline{\widehat{\Gamma}\bs\widehat{\DD}}^{\calH_{\infty}}$. Both are finite.

We have the following commutative diagram:
\begin{equation*}
\begin{tikzcd}
\F\arrow{r}{\Prd} \arrow{d}{j} &  \Gamma\bs\DD\arrow{d}{\pi}\\
\overline{\calM}\arrow{r}{\Prd} &  \overline{\widehat{\Gamma}\bs\widehat{\DD}}^{\calH_{\infty}}.
\end{tikzcd}
\end{equation*}
Since $\Prd(\F)$ is open in $\Gamma\bs\DD$, the two closures $\overline{j(\F)}$ and $\overline{\pi(\Gamma\bs\DD)}$ are identified via $\Prd\colon \overline{\calM}\cong \overline{\widehat{\Gamma}\bs\widehat{\DD}}^{\calH_{\infty}}$. By lemma 5.4 in \cite{yu2018moduli}, there is an isomorphism $\overline{\F}\cong\overline{\Gamma\bs\DD}^{\calH_*}$ extending $\Prd\colon \F\longrightarrow \Gamma\bs(\DD-\calH_*)$.
\end{proof}

As an end, we give two examples of theorem \ref{theorem: last}.

\begin{ex}
\label{example: six involution}
An involution $\tau$ of $\PP(V)$ must be represented by $\diag(1,1,-1)$ on $V$, with respect to certain choice of coordinate $(x_1, x_2, x_3)$. Let $\overline{A}=\{id, \tau\}$ and $A$ be the preimage of $\overline{A}$ in $\SL(V)$. Take $\lambda$ to be the trivial character of $A$. We take symmetry type $T_{sym}=[(A,\lambda)]$, and singular type $T_{sin}=(1,1,1,1,1,1)$. We next describe $\Sigma_{T_{sin},T_{sym}}$. A point in $\Sigma_{T_{sin}, T_{sym}}$ corresponds to six lines on $\PP(V)$ preserved by the action of $\tau$, such that any three of them do not intersect at one point. A line $l\subset \PP(V)$ satisfies $\tau(l)=l$ if and only if $l=\{x_3=0\}$ or $l$ passes through $[0:0:1]$. We call a pair of different lines $\tau$-conjugate, if one line is mapped to the other via $\tau$. The points in $\Sigma_{T_{sin},T_{sym}}$ belong to the following two irreducible components: one component $\Sigma_1$ with the corresponding six lines being union of three $\tau$-conjugate pairs, the other component $\Sigma_2$ with the corresponding six lines being union of two different lines passing through $[0:0:1]$ and two $\tau$-conjugate pairs. Denote by $\PP(V^*)$ the space of lines on $\PP(V)$, and by $\PP(V^*)/\tau\cong \PP(1,1,2)$ the space of $\tau$-orbits in $\PP(V^*)$. Let $N=\SL(\GL(2,\CC)\times\CC^{\times})$.

Consider the morphism
\begin{equation*}
j_1\colon S_3\bs(\PP(V^*)/\tau)^3\longrightarrow S_6\bs\PP(V^*)^6
\end{equation*}
sending $[(l_1,l_2,l_3)]$ to $[(l_1,l_2,l_3,\tau(l_1),\tau(l_2), \tau(l_3))]$ for $l_1,l_2,l_3\in\PP(V^*)$. This morphism is a closed embedding with image $\overline{\Sigma}_1$. Hence $\widetilde{\Sigma}_1=\overline{\Sigma}_1=S_3\bs(\PP(V^*)/\tau)^3$. So the $\GIT$ quotient is \begin{equation*}
\overline{\F}_1\cong (N \times S_3)\dbs ((\PP(V^*)/\tau)^3, \calO(1)\boxtimes\calO(1)\boxtimes\calO(1))\cong (N\times S_3)\dbs (\PP(1,1,2)^3,\calO(1)\boxtimes\calO(1)\boxtimes\calO(1))
\end{equation*}
and is isomorphic to the Baily-Borel compactification of an arithmetic quotient of a type $\IV$ domain with dimension $2$, i.e. product of two upper half planes by theorem \ref{theorem: last}.

Let $V_2$ be the quotient of $V$ by the line $\{x_1=x_2=0\}\subset V$. Then $\PP(V_2^*)$ parametrizes lines in $\PP(V)$ passing through $[0:0:1]$. Consider the finite morphism
\begin{equation*}
j_2\colon (S_2\bs \PP(V_2^*)^2)\times (S_2\bs(\PP(V^*)/\tau)^2)\longrightarrow S_6\bs\PP(V^*)^6
\end{equation*}
sending $[(l_1,l_2,l_3,l_4)]$ to $[(l_1,l_2,l_3,l_4, \tau(l_3),\tau(l_4))]$ for $l_1,l_2$ passing through $[0:0:1]$ and $l_3, l_4\in\PP(V^*)$. This morphism is generically injective with image $\overline{\Sigma}_1$. Hence $\widetilde{\Sigma}_1=(S_2\bs \PP(V_2^*)^2)\times (S_2\bs(\PP(V^*)/\tau)^2)$. So the $\GIT$-quotient is $\overline{\F}_2\cong N \times (S_2)^4\dbs (\PP(V_2^*)^2\times\PP(V^*)^2, \calO(1)^{\boxtimes 4})$, and is isomorphic to the Baily-Borel compactification of an arithmetic quotient of a type $\IV$ domain with dimension $2$, i.e. product of two upper half planes, by theorem \ref{theorem: last}.
\end{ex}

\begin{ex}
\label{example: line quintic three}
Let $\rho$ be an automorphism of $\PP(V)$ represented by $\diag(1,1,\omega)$ with respect to certain choice of coordinate $(x_1,x_2,x_3)$. Here $\omega=e^{\frac{2\pi\sqrt{-1}}{3}}$ is a third root of unity. Let $\overline{A}=\{1,\rho,\rho^2\}$ and $A$ be the preimage of $\overline{A}$ in $\SL(V)$. Take $\lambda$ to be the trivial character of $A$. We take symmetry type $T_{sym}=[(A,\lambda)]$ and singular type $T_{sin}=(1,5)$. In this case $\Sigma_{T_{sin},T_{sym}}$ is irreducible and we write $\Sigma=\Sigma_{T_{sin},T_{sym}}$ for short. The equation of a point in $\Sigma$ can be arranged as
\begin{equation*}
(t_1 x_1+t_2 x_2)(L_2(x_1,x_2)x_3^3+L_5(x_1,x_2)),
\end{equation*}
where $t_1,t_2\in \CC$ and $L_2,L_5$ are polynomials of $x_1,x_2$ of degree $2,5$ respectively. The line $(t_1 x_1+t_2 x_2=0)$ passes through $[0:0:1]$, and such lines are parametrized by $\PP(V_2^*)$ as defined in example \ref{example: six involution}. The quintics $Z(L_2(x_1,x_2)x_3^3+L_5(x_1,x_2))$ are parametrized by $\PP(\Sym^2(V_2^*)\oplus \Sym^5(V_2^*))$. Consider the natural finite morphism
\begin{equation*}
j\colon \PP(V_2^*)\times\PP(\Sym^2(V_2^*)\oplus \Sym^5(V_2^*))\longrightarrow \PP(V^*)\times\PP(\Sym^5(V^*)).
\end{equation*}
This morphism is generically injective with image $\overline{\Sigma}$. Thus $\widetilde{\Sigma}=\PP(V_2^*)\times\PP(\Sym^2(V_2^*)\oplus \Sym^5(V_2^*))$. The reductive group $N$ is equal to $\SL(\GL(2,\CC)\times \CC^{\times})$. So the $\GIT$-quotient is 
\begin{equation}
\label{example: order 3 GIT}
\overline{\F}\cong N \dbs (\PP(V_2^*)\times\PP(\Sym^2(V_2^*)\oplus \Sym^5(V_2^*)), \calO(1)\boxtimes \calO(1)). 
\end{equation}

Consider the following short exact sequence:
\begin{equation*}
1\longrightarrow \ZZ/2\longrightarrow \SL(2, \CC)\times \CC^*\longrightarrow N\longrightarrow 1
\end{equation*}
where $\diag(B, t)\in \SL(2, \CC)\times \CC^*$ is sent to $\diag(tB, t^{-2})\in N$. The kernel $\ZZ/2$ is acting trivially on $\widetilde{\Sigma}$. Straightforward calculation shows that the $\GIT$ quotient of $(\PP(V_2^*)\times\PP(\Sym^2(V_2^*)\oplus \Sym^5(V_2^*)), \calO(1)\boxtimes \calO(1))$ by $\CC^*$ ($t\in \CC^*$ is acting via $\diag(t,t,t^{-2})$) is $\PP(V_2^*)\times\PP(\Sym^2(V_2^*))\times\PP( \Sym^5(V_2^*))$ with polarization $\calO(3)\boxtimes \calO(2)\boxtimes\calO(1)$. So the $\GIT$ quotient \eqref{example: order 3 GIT} can be rearranged as 
\begin{equation*}
\overline{\F}\cong \SL(2)\dbs (\PP^1\times \PP^2\times \PP^5, \calO(3)\boxtimes \calO(2)\boxtimes\calO(1)).
\end{equation*}
In this case, the character of $\mu_3$ on $H^{2,0}(\widetilde{S}_F)$ is non-real. Thus $\overline{\F}$ is isomorphic to the Baily-Borel compactification of an arithmetic ball quotient with dimension $5$ by theorem \ref{theorem: last}.

Notice that $S_n\bs((\PP^1)^n, \calO(1)^{\boxtimes n})\cong (\PP^n, \calO(1))$. So $\overline{\F}$ is a quotient of
\begin{equation*}
\widetilde{\F}\cong  \SL(2)\dbs ((\PP^1)^8, \calO(3)\boxtimes \calO(2)^{\boxtimes 2}\boxtimes \calO(1)^{\boxtimes 5}).
\end{equation*}
by permutation group $S_2\times S_5$. The $\GIT$ quotient $\widetilde{\F}$ also appears in Deligne-Mostow's list of ball quotients. See example (11) in \cite{mostow1988ball} or example (20) in \cite{thurston1998shapes}. The construction in Deligne-Mostow \cite{deligne1986monodromy} uses period of certain cyclic cover of $\PP^1$, which is quite different from period of $K3$ surface used here. We conjecture these two constructions give the same ball-quotient structure on $\overline{\F}$, and there exists a geometric construction relating the two approaches. 
\end{ex}

\bibliography{reference}
\bibliographystyle{alpha}

\Addresses
\end{document}